\newtheorem{thm}{Theorem}[section]
\newtheorem*{main}{Main Theorem}
\newtheorem*{ky}{Lemma 5.1}
\newtheorem*{ky2}{Lemma 5.2}
\newtheorem*{conj}{Finite Global Attractor Conjecture}
\newtheorem{prop}[thm]{Proposition}
\newtheorem{lema}[thm]{Lemma}
\newtheorem*{lema6}{Thick-thin decomposition Theorem}
\theoremstyle{definition}
\newtheorem*{defi}{Definition}
\newtheorem{rmk}{Remark}
\makeatletter\@addtoreset{equation}{section}\makeatother
\begin{document}

\author{Shuyi Wang \&  Gaofei Zhang}

\address{School of Mathematical Sciences, Ocean University of China, Qingdao 266100, China}
\email{sywangmath@aliyun.com}
\address{Department of Mathematics, QuFu Normal University, Qufu  273165,   P. R. China}
\email{zhanggf@hotmail.com}

\title[]{  Polynomial curve systems are exponentially decaying  }

\begin{abstract}
The existence of a finite global attractor for polynomial curve systems has been known since the work of Belk \emph{et al.}~\cite{BLMW}.
However, except in the hyperbolic case, the rate at which the pullback of a curve under a polynomial converges to the attractor has remained unclear.

In this paper, we introduce the notions of \emph{quick returns} and \emph{barrier lakes} to analyze the combinatorial models of curves.
These concepts allow us to show that if a certain number of successive pullbacks do not decrease the complexity of the curve by a definite proportion, then the curve admits a \textit{thick--thin decomposition}: most of the curve is organized into finitely many disjoint annuli whose core curves have bounded homotopy type.
These annuli are essentially the roundabouts for the polynomial case conjecturally predicted by Bartholdi--Dudko--Pilgrim in their strategy to attack the general finite global attractor conjecture~\cite{BDP}.
In this case, we can show that some number of successive pullbacks must decrease the complexity of the curve by a definite factor.

This implies that the complexity of a curve $C$ decreases exponentially under iteration of the pullback by a polynomial $f$:
\[
N_{\mathcal{F}}(\eta) \le A \, N_{\mathcal{F}}(C) \, e^{-n \delta} + D, \qquad \forall n\ge 1,
\]
where $\mathcal{F}$ is an admissible family of separation arcs,
$N_{\mathcal{F}}(\cdot)$ denotes the minimal intersection number of the curves in its homotopy class with the arcs in $\mathcal{F}$,
$\delta>0$ is a constant depending only on $f$,
$A, D > 0$ are constants depending only on $\mathcal{F}$ and $f$,
and $\eta$ is any component of $f^{-n}(C)$.

Consequently, the pullback of a curve contracts exponentially to the attractor. In particular, this provides a quantitative proof of the finite global attractor conjecture for the polynomial case.
\end{abstract}

\subjclass[2010]{Primary: 37F45; Secondary: 37F10, 37F30}

\keywords{}

\date{\today}



\maketitle


\section{Introduction}
The dynamics of the curve system for a Thurston map is closely related to the rationality of a topological map and the boundary behavior of the action of the Thurston pullback map on its Teichm\"uller space. Both aspects have been studied extensively in recent years; see \cite{BDP,BN,BFH,BLMW,BHI,BHL,CFPP,F,HS,KPS,L,P1,P0,P2,PT,RSY,S,Se,SY,ST,T}¡ªand this list is by no means complete. This connection leads to the following open problem in holomorphic dynamics \cite{BDP,BHI,P}.

\begin{conj} Let $f$ be a post-critically finite rational map and $P_f$ its post-critical set. Suppose $f$ is not a flexible Latt\'{e}s map. Then there exists a finite set of non-peripheral curves $\gamma_i$, $i = 1, \dots, n$, such that for any non-peripheral curve $\gamma$ in $\widehat{\Bbb C} \setminus P_f$  and all $k$   large enough, any non-peripheral component of $f^{-k}(\gamma)$ must be homotopic to some $\gamma_i$.
\end{conj}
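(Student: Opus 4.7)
The plan is to address the conjecture in the polynomial case (which is what the paper's title and abstract assert), by proving the stronger quantitative fact advertised in the abstract: the combinatorial complexity of a non-peripheral curve decays exponentially under pullback by $f$. Once complexity is uniformly bounded, the conjecture follows at once, because only finitely many homotopy classes of non-peripheral curves in $\EC \setminus P_f$ realize any fixed bound.

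First I would fix a combinatorial model. For a post-critically finite polynomial $f$, pick a finite topological tree $T \subset \EC$ spanning $P_f$ (for instance a Hubbard tree, suitably extended), and encode each non-peripheral curve $\gamma$ by the cyclic reduced word $w(\gamma)$ of edges of $T$ crossed transversally by $\gamma$; define the complexity $c(\gamma)$ to be the length of $w(\gamma)$. Using the tree lifting algorithm of \cite{BLMW}, one can read off, from $w(\gamma)$ together with the combinatorial lift of $T$, the word $w(\gamma')$ of every non-peripheral component $\gamma'$ of $f^{-1}(\gamma)$. The bookkeeping of the algorithm converts the dynamical problem of pulling back curves into a purely word-combinatorial problem on cyclic words in the edge alphabet of $T$.

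Next I would introduce the two combinatorial features suggested in the abstract. A \emph{quick return} in $w(\gamma)$ is a short subword whose endpoints return to the same neighborhood in $T$ so that, under pullback, the corresponding arc becomes homotopically trivial relative to $P_f$ (hence peripheral and discarded) or collapses to a strictly shorter arc. A \emph{barrier lake} is a complementary component of $T$ whose lift geometry forces any subword of $w(\gamma)$ that enters and exits it to shorten upon pullback. The heart of the argument would then be a structural dichotomy: any non-peripheral curve whose homotopy class is not already on the list of candidate attractor curves must exhibit, inside $w(\gamma)$, either a quick return or an essential barrier-lake crossing, and each such feature eliminates a definite positive proportion of letters from the word when one passes to any non-peripheral lift. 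Summing these contributions across the whole cyclic word produces a uniform contraction $c(\gamma') \leq \lambda\, c(\gamma)$ with $\lambda \in (0,1)$ depending only on $f$.

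Iterating this contraction gives $c(\gamma^{(k)}) \leq \lambda^{k} c(\gamma)$ for every non-peripheral iterated pullback $\gamma^{(k)}$, so after $k = O(\log c(\gamma))$ steps all non-peripheral components have complexity bounded by a constant depending only on $f$, forcing them into a finite list of homotopy classes $\gamma_1,\ldots,\gamma_n$, which is exactly the conclusion of the conjecture. The main obstacle I foresee is establishing the dichotomy of the previous paragraph: I must rule out the possibility of long, highly recurrent words in $T$ that avoid both quick returns and essential barrier-lake crossings yet fail to shrink under pullback. Overcoming this requires a careful compatibility analysis between the critical portrait of $f$, the way the tree lifting algorithm distributes letters across the components of $f^{-1}(\gamma)$, and the recurrent dynamics on $T$, so that every recurrent pattern that escapes being a quick return is trapped by a barrier lake, and vice versa.
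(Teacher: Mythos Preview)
Your high-level strategy---prove exponential decay of a combinatorial complexity, then conclude finiteness of the attractor---matches the paper's. But you have inverted the roles of quick returns and barrier lakes, and this inversion hides the real difficulty. In the paper, complexity is measured by intersection number with a fixed family $\mathcal{F}_0$ of \emph{periodic} separating arcs (unions of external/internal rays), not by tree-edge crossings; the periodicity $F(L)=L$ is what makes $N(\cdot,L)$ non-increasing under pullback. A \emph{quick return} is not a subword that collapses: it is a piece $F(S)$ of $C$ itself, with $N(F(S),L)\le C_0$, arising from a segment $S$ of the pullback $C^1$ lying in the thin strip between $L$ and a nearby preimage $L'$. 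Almost every segment $S$ yields a quick return (Lemma~\ref{obs-2}); the dichotomy is whether the gap $K(S)$---the number of $C\cap L$ points between the two endpoints of $F(S)$ on $L$---is large or small. If some $K(S)$ is a definite fraction of $N(C,L)$ one gets contraction immediately (Lemmas~\ref{obs-1} and~\ref{QR}). When all $K(S)$ are small, one gets $\epsilon$-\emph{barrier lakes}, and this is the obstruction case, not a source of contraction.

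The substance of the paper is what happens in that obstruction case: the barrier-lake structure forces most of $C$ into at most $|P_f|$ disjoint annuli $A_i$, each consisting of many long spirals of the same homotopy type and bounded loop complexity (Lemma~\ref{ann-1}). Contraction then comes from a completely different mechanism: pullback shrinks the hyperbolic metric on $\C\setminus P_f$, so the winding number of each spiral drops exponentially (Lemma~\ref{WS}); but since one spiral lifts to many, one must also show that the number of \emph{efficient} spirals in the lifted annuli stays bounded independently of the iterate (Lemma~\ref{e-num}), which uses the polynomial mapping property in an essential way. Your proposal omits this annulus/spiral structure entirely and asserts that barrier lakes directly eliminate letters, which they do not; without the spiral analysis and the spiral-count bound, the argument stalls exactly at the ``long, highly recurrent words'' you flag as the obstacle---those words are precisely the long spirals, and handling them is where all the work lies.
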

Progress has been made toward resolving this conjecture.  Kelsey and Lodge verified it for quadratic non-Latt¨¨s maps with four post-critical points \cite{KL}. Hlushchanka proved the conjecture for all critically fixed rational maps \cite{H}. Pilgrim established that the conjecture holds when the associated virtual endomorphism on the mapping class group is contracting, as well as for quadratic polynomials with a periodic critical point, and for three specific quadratic polynomials \cite{P0}. In addition, Bonk and his collaborators made substantial contributions to the case of four post-critical points \cite{BHI}\cite{BHL}; this case has recently been completely solved by Bartholdi, Dudko and Pilgrim \cite{BDP}. There is also related work on marked rational maps with three post-critical points; see \cite{Sm}. Most recently, by adapting the method developed in this paper, the existence of a finite global attractor was verified for a class of rational maps obtained by gluing two polynomials along the boundaries of two finite immediate attracting basins \cite{W}.

The conjecture for general  polynomial case
was recently solved by  Belk $\emph{et al}$, by using tree lifting algorithm \cite{BLMW}.  Their proof is based on the iteration of a simplicial map on the complex of trees.  Let us summarize the idea of the proof  as follows. The reader may refer to \cite{BLMW} for the relative details.  Assume that  $C\subset \Bbb C - P_f$ is a
 non-peripheral curve. Then  $C$ is the  boundary of a Jordan   neighborhood of a subforest of the
 tree containing $P_f$ as vertices. As a consequence of the polynomial mapping property,   each
 pull back of  $C$  by $f$ is the boundary of a Jordan  neighborhood of some subforest of the lifting of the tree by $f$.
On the other hand,  the tree lifting operator  $\lambda_f: \mathcal{T} \to \mathcal{T}$ is a simplicial map where $\mathcal{T}$ is
 the complex of trees containing $P_f$ as vertices.    It was proved in
  \cite{BLMW} that, $\lambda_f$ does not increase  a metric on $\mathcal{T}$ and the iteration of $\lambda_f$ on any tree must eventually   converges to some small neighborhood of a Hubbard vertex, which contains finitely many trees  when $f$ is a polynomial.  So  the pull back of a curve  eventually becomes the boundary of a Jordan neighborhood of
   some subforest of one of the
   finitely many trees. The finite global attractor conjecture for the polynomial case thus follows.  However,   as pointed out in \cite{BLMW},   the proof
  does not tell how fast the iteration of $\lambda_f$ on a tree converges to a Hubbard vertex, and correspondingly,  it  is not clear   how fast a curve, under the iteration of the pull back,  converges to the attractor $\{\gamma_i\}$. In fact,  except the hyperbolic case,  for which Nekrashevych's work
   \cite{N} may be adapted to show that   $\lambda_f$ is an exponential contraction,
 it is not clear in general  if  $\lambda_f$  contracts the metric in $\mathcal{T}$ by a definite rate (see \emph{Overview of the proof} \cite{BLMW})

 \begin{quote}
 \itshape   It would be interesting to understand the global rate of the contraction of $\lambda_f$. Nekrashevych studies a cell complex that is closed related to our $\mathcal{T}_n$ and shows that a hyperbolic polynomial acts on it with exponential contraction. We suspect that a similar argument would show that a hyperbolic polynomial gives exponential contraction of $\mathcal{T}_n$; however, the non-hyperbolic case is more mysterious.
 \end{quote}
         This work is motivated by the above question. The goal of the paper is to show that, in both hyperbolic and non-hyperbolic cases, the complexity of a curve always decreases exponentially. Consequently, the iteration of the pullback of a curve converges exponentially to the attractor $\{\gamma_i\}$. In particular, this gives an quantitative   proof of the finite global attractor conjecture for the polynomial case.

Before stating the main theorem, let us first use the intersection number to define the \emph{complexity} of a curve with respect to a family of separating arcs. A similar notion was used in previous works (see, e.g., \cite{H} \cite{BHI}).

   Let $f$ be a post-critically finite polynomial and let $P_f$ be the post-critical set of $f$.  We call a simple   arc $L \subset \mathbb{C}$ \emph{separating} if both ends of $L$ approach infinity. We denote by $N(\cdot, \cdot)$ the intersection number of two curves(or two family of curves). By homotopy, we always assume that all intersections considered are transversal. In particular, for a separating arc $L$  and a simple closed curve
    $C \subset \mathbb{C} \setminus P_f$, $N(C, L)$ denotes the intersection number of $C$ and $L$. For a finite set $\mathcal{F}$ of separating arcs, set
\[
N_{\mathcal{F}}(C) = \min_{\gamma} \sum_{L \in \mathcal{F}} N(\gamma, L),
\]
where the minimum is taken over all $\gamma$ that are homotopic to $C$ in $\mathbb{C} \setminus P_f$. We call the quantity $N_{\mathcal{F}}(C)$ the \emph{complexity} of $C$ with respect to $\mathcal{F}$. To ensure that $N_{\mathcal{F}}(C)$ truly reflects the complexity of the curve $C$, we impose some conditions on $\mathcal{F}$.

\begin{defi}
Let $\mathcal{F}$ be a family of separating arcs. We call $\mathcal{F}$ \emph{admissible} if each component of $\mathbb{C} - \bigcup_{L \in \mathcal{F}} L$ contains at most one point of $P_f$, and moreover, any two separating arcs in $\mathcal{F}$ are either disjoint or intersect each other only at points of $P_f$.
\end{defi}

\begin{main}\label{B1}
Let $f$ be a post-critically finite polynomial of degree $d \ge 2$, let $P_f$ be its post-critical set, and let $\mathcal{F}$ be an admissible family of separating arcs. Then there exist constants $\delta = \delta(f) > 0$ depending only on $f$, and $A = A(f, \mathcal{F}) > 1$, $D = D(f, \mathcal{F}) > 0$ depending only on $f$ and $\mathcal{F}$, such that for any simple closed curve $C \subset \mathbb{C} - P_f$ and any integer $n \ge 1$,
\[
N_{\mathcal{F}}(\eta) \le A \, N_{\mathcal{F}}(C) \, e^{-n \delta} + D,
\]
where $\eta$ is any component of $f^{-n}(C)$.
\end{main}

  Let us explain the constants appearing in the main theorem. The constant $D$ is the upper bound of the complexity of the attractors $\{\gamma_i\}$ with respect to $\mathcal{F}$; it reflects the existence and finiteness of the global attractor of $f$. The constant $A$ is dispensable, since in general, not every single pullback, but rather a certain number of successive pullbacks, decreases the complexity of a curve at a definite rate.

Finally, we would like to present a question posed by Shishikura in an online seminar.

\noindent \textbf{Question.} (Shishikura, \cite{Sh}) What is the relation between $\delta$ and the core entropy of the Hubbard tree for $f$?

 \tableofcontents

  \section{The idea of of the proof}
The proof is based on a detailed analysis of the combinatorial model of the curves. We first choose a specific admissible arc family $\mathcal{F}$ so that each separating arc in $\mathcal{F}$ is eventually mapped to a separating arc $L$ which is either the union of two periodic external rays landing at the same point, or the union of two periodic internal rays emanating from a super-attracting periodic point together with the two external rays extending them respectively. The construction relies on the fact that the image of each edge of the Hubbard tree under forward iteration of the polynomial eventually covers either an expanding edge or an attracting edge (see Lemma~\ref{even-p}). Let $\mathcal{F}_0$ denote the family of such $L$. In general, $\mathcal{F}_0$ is no longer admissible; however, we will see in Section~4 that the complexity with respect to $\mathcal{F}$ is dominated by the complexity with respect to $\mathcal{F}_0$, and thus it suffices to prove the main theorem for $\mathcal{F}_0$.

By construction, the arcs in $\mathcal{F}_0$ are all periodic. Let $p$ be the common period of these arcs. Then $f^p$ maps each arc $L$ in $\mathcal{F}_0$ homeomorphically onto itself, preserving the orientation.  Let $C$ be a minimal representative in its homotopy class.
It is clear that $C$ minimizes $N(C, L)$ for all $L \in \mathcal{F}_0$. We may choose $L \in \mathcal{F}_0$ such that
\begin{equation}\label{bbaa}
N(C, L) = \max_{\Theta \in \mathcal{F}_0} N(C, \Theta).
\end{equation}
It suffices to prove that after a fixed number of pullbacks by $f$, the quantity $N(C, L)$ decreases by a definite factor. To see this, we take a separating arc $L'$ close to $L$ so that $f^{lp}$, for some integer $l \ge 1$, maps $L'$ homeomorphically onto $L$ preserving orientation, and moreover, the strip bounded by $L$ and $L'$ contains no post-critical points of $f$ (see Figure~1 for an illustration).  Let $F = f^{lp}$. Consequently,   $N_{\mathcal{F}_0}(C)$ is not increased under the pullback of $F$.   The core idea is to show that, once the complexity of $C$ is sufficiently large, then either the pullback of $C$ by $F$ decreases $N(C, L)$ by a definite factor, or $C$ admits a $\emph{thick-thin decomposition}$:  most of $C$ is organized into finitely many ``thick disjoint annuli'', each containing a large number of long spirals of the same homotopy type. Here ``most'' refers to the part that contributes the most to $N(C, L)$. The proof is then completed by showing that, in the latter case, a  fixed number of successive pullbacks by F  necessarily yields a definite proportion of inefficient spirals, thereby reducing $N(C, L)$  by a definite factor.  We now explain the idea in more detail.

       \begin{figure}[!htpb]
  \setlength{\unitlength}{1mm}
  \begin{center}

 \includegraphics[width=135mm]{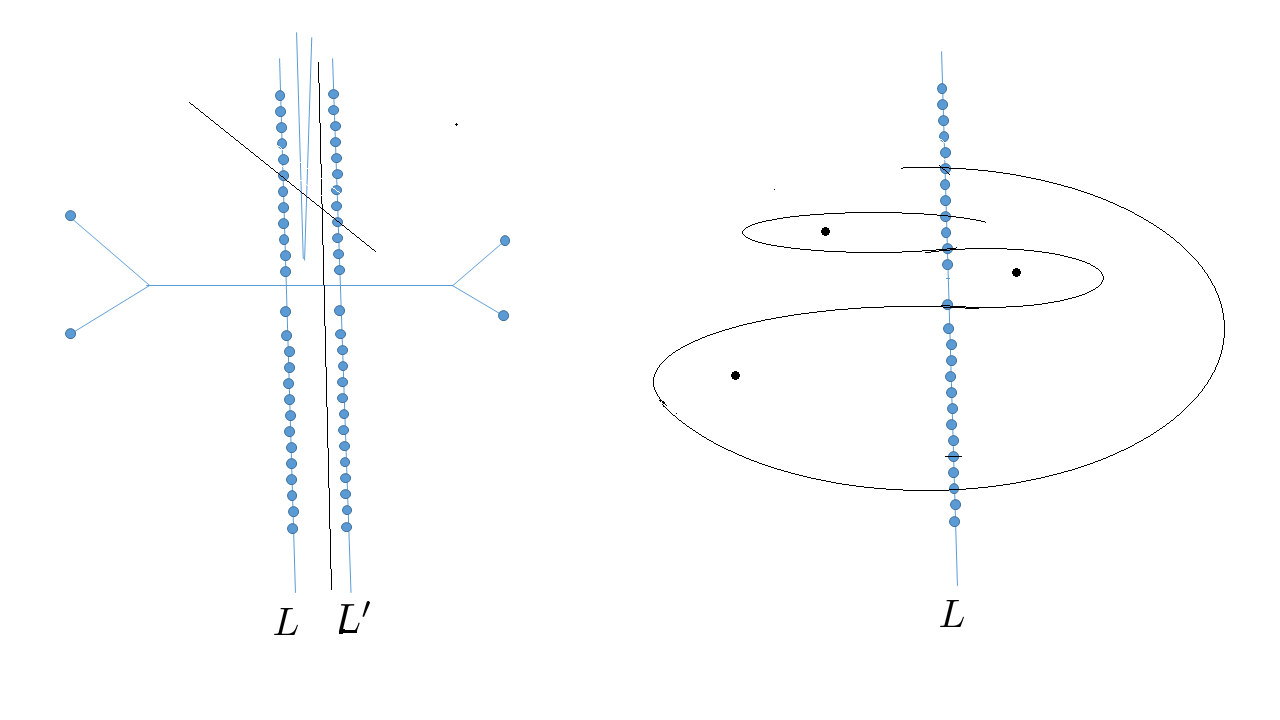}
  \caption{Quick returns -  the lines between $L$ and $L'$ are some pre-images of $L$. The dots on $L$ and $L'$ on the left  denote the preimages of the intersection points of $C$ and $L$ on the right. }
  \label{Figure-1}

  \end{center}
  \end{figure}

Let  $C^1$ be a component of $F^{-1}(C)$.
Since $F: L \to L$ is an orientation-preserving homeomorphism, we must have $N(C^1, L) \le N(C, L)$.
The starting point of this work is the following key observation.

\begin{ky}\label{pa}
Suppose $d \ge 2$ is the degree of $f$, and let $l, p$ be the integers determined as above.
Then whenever $N(C, L) > d^{lp}$, we must have
\[
\min_{\Gamma} N(\Gamma, L) < N(C, L),
\]
where the minimum is taken over all $\Gamma$ in the homotopy class of $C^1$.
\end{ky}

The proof proceeds by considering the pieces $S$ of $C^1$ that lie in the strip bounded by $L$ and $L'$, with the two endpoints belonging to $L$ and $L'$, respectively.
Then $F(S)$ is an arc piece of $C$ whose two endpoints lie on $L$.
If the lemma were false, then for every such piece the two endpoints of $F(S)$ would coincide.
This would imply that the degree of $F: C^1 \to C$ is at least $N(C, L)$, contradicting the fact that the degree of $F$ equals $d^{lp}$.
See Figure~1 for an illustration of this idea, and see Section~5 for a detailed discussion.

To understand how the complexity of the curve decreases, we explore the above idea further.
Let $K(S)$ denote the number of intersection points of $C$ and $L$ that lie between the two endpoints of $F(S)$.
By a direct argument, we obtain the following.

\begin{ky2}
For each such $S$, we have
\begin{equation} \label{bbcc}
\min_{\Gamma} N(\Gamma, L) + K(S) \le N(C, L),
\end{equation}
where the minimum is taken over all $\Gamma$ in the homotopy class of $C^1$.
\end{ky2}

Thus the main theorem follows if there exists a piece $S$ for which the quantity $K(S)/N(C, L)$ admits a positive lower bound depending only on $f$; see Lemma~\ref{QR}.

  \begin{figure}[htbp]
 \centering
 \begin{minipage}[t]{0.40\textwidth}
   \includegraphics[width=\textwidth]{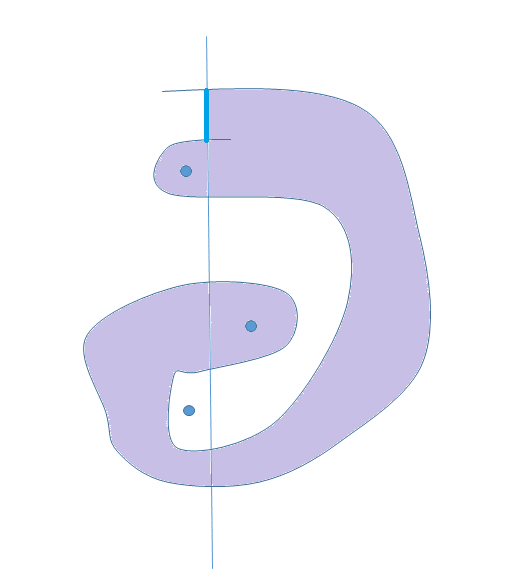}
   \caption{Type I barrier lake }
   \label{fig:image1}
 \end{minipage}
 \hfill
 \begin{minipage}[t]{0.40\textwidth}
   \includegraphics[width=\textwidth]{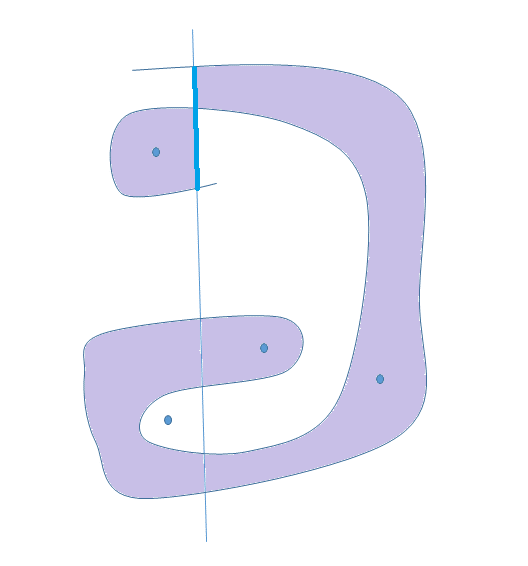}
   \caption{Type II barrier lake }
   \label{fig:image2}
 \end{minipage}
\end{figure}

  To obtain a contradiction, we fix constants $C_0, \epsilon > 0$ (to be specified in the final step) such that
\[
\max\{d^{lp}/C_0,\; \epsilon C_0^2 \} \ll 1/|P_f|,
\]
where $|P_f|$ denotes the number of points in $P_f$.

We say that $S$ (or $F(S)$) is a \emph{quick return} if $N(F(S), L) \le C_0$.
We may assume that
\begin{equation}\label{ccdd}
\min_{\Gamma} N(\Gamma, L) > (1 - \epsilon) N(C, L),
\end{equation}
where the minimum is taken over all $\Gamma$ in the homotopy class of $C^1$.
Indeed, if this were not the case, then from (\ref{bbaa}) the quantity $\mathcal{N}_{\mathcal{F}_0}(C^1)$ would be smaller than $\mathcal{N}_{\mathcal{F}_0}(C)$ by a definite factor determined by $\epsilon$.

We will see that for most $S$, $F(S)$ is a quick return (see Lemma~\ref{obs-2}).
From (\ref{bbcc}) and (\ref{ccdd}), for each such $S$ we have
\[
K(S) < \epsilon N(C, L).
\]
In this situation, we call the segment $I \subset L$ connecting the two endpoints of $F(S)$ an \emph{$\epsilon$-barrier}, and the union of the bounded components of $\mathbb{C} \setminus (F(S) \cup I)$ an \emph{$\epsilon$-barrier lake}.
We say that an $\epsilon$-barrier lake is of \emph{type I} if it does not separate post-critical points; otherwise, it is of \emph{type II}.
We will see that if $F(S)$ yields a type I barrier lake, then $F(S)$ is the union of finitely many adjacent loops (see Lemma~\ref{lp1}).

The essential idea of $\epsilon$-barrier lakes is as follows: once the curve passes through the barrier and enters the interior of the lake, the thinness of the barrier typically forces it to remain inside for a long time before exiting, unless the interior of the lake contains very few intersection points of $C$ and $L$; the same holds for the reverse direction. This allows us to organize most of the curve $C$ by a finite family of type II barrier lakes. These type II barrier lakes form a frame in which the type I barrier lakes are grouped into finitely many classes, each class consisting of a large number of type I barrier lakes whose loops have the same homotopy type. Here ``most'' means the part that contributes the most to $N(C, L)$. We will show that most of $C$ is the union of finitely many disjoint annuli, each of which consists of a number of long spirals with the same winding number and homotopy type.

\begin{lema6}\label{ann-1} Suppose $C$ is a simple closed curve in $\Bbb C - P_f$ and minimal with respect to $\mathcal{F}_0$, 
 and $C^1$ is a component of $F^{-1}(C)$ so that
$$
\min_{\Gamma \sim C^1} N(\Gamma, L) > (1 - \epsilon) N(C, L)
$$ where the $\min$ is taken in the homotopy class of $C^1$.
Then most part of $C$ can be organized into finitely many   disjoint annuli $A_i, 1 \le i \le  n ( \le |P_f|-3)$, each of which consisting of a number of long spirals with the same winding number and homotopy type, so that
$$
N(A_i, L) >  4 \epsilon C_0^2 N(C, L),  \:\: 1 \le i \le n,
$$
and
$$
 N(C, L) -  \sum_{1\le i \le n}   N(A_i, L)\le  (\epsilon +  d^{lp}/C_0 +8 |P_f|   C_0^2  \epsilon )N(C, L),
$$
and the  homotopy complexity of any loop in
  these $A_i$, with respect to $\mathcal{F}_0$,    has an upper bound  depending only on  $C_0$.
\end{lema6}

 We would like to connect the thick--thin decomposition presented here with that of Bartholdi--Dudko--Pilgrim~\cite{BDP}, as both are essentially the same phenomenon seen from different perspectives. From the analytic perspective of Bartholdi--Dudko--Pilgrim, when the pullback map fails to contract the norm of a tangent vector by a definite factor -- equivalently, when the pushforward operation does not cause a definite amount of loss of mass of the dual quadratic differential -- the quadratic differential must concentrate on finitely many disjoint fat annuli. Similar reasoning also appears in the iteration of the Thurston pullback for the Teichm\"{u}ller space of post-singularly finite exponentials \cite{HSS}. In our combinatorial setting, the analogous condition is that the intersection number with a separating arc decreases very slowly under pullback. This forces the curve, via the mechanism of quick returns and $\epsilon$-barrier lakes, to organize itself into a finite set of disjoint thick annuli, each consisting of many long spirals of identical homotopy type. Thus both viewpoints converge to a unifying principle: in the absence of effective contraction, the dynamical complexity must concentrate on a finite union of annular regions, where the dynamics appears in a repeated pattern.

We will utilize these ideas in Section 8 to finish the proof.
Suppose (\ref{ccdd}) holds. By the thick--thin decomposition theorem, $C$ admits a thick--thin decomposition. For $m \ge 1$, the pullback of $A_i$ by $F^m$ yields finitely many disjoint annuli $A_i^j$.
We consider only those $A_i^j$ whose loops are non-peripheral; in particular, with respect to the orbifold metric, their lengths have a positive lower bound depending only on $f$.
Since the complexity of the loops in $A_i$, with respect to $\mathcal{F}_0$, is bounded by some constant depending only on $C_0$, for $m \ge t$ (where $t$ is the integer in Lemma~\ref{w-2}), it follows from Lemma~\ref{w-2} that the complexity of the loops in $A_i^j$, with respect to $\mathcal{F}$, has an upper bound depending only on $C_0$. In particular, the length of the loops in $A_i^j$ with respect to the orbifold metric, up to homotopy, has an upper bound.

Since pullback by $F$ strictly decreases the orbifold metric in a neighborhood of the Julia set, the degree of $F^m: A_i^j \to A_i$, and thus the number of spirals in $A_i^j$, grows exponentially with $m$.
On the other hand, it will be shown that the number of efficient spirals in $A_i^j$ is bounded. Here ``efficient'' means that the spiral cannot be removed by deformation in $\mathbb{C} - P_f$.

Therefore, by choosing $m$ large enough, at least half of the spirals in all $A_i^j$ become inefficient.
The main theorem then follows.

\section{Hubbard tree}
In this his section we will give a background for the  polynomial dynamics, especially for post-critically finite polynomials.
Suppose $f$ is a degree $d$ polynomial for some $d \ge 2$. We call the set
$$
K(f) = \{z\: |\: f^n(z) \nrightarrow \infty \hbox{ as } n \to \infty\}
$$ the filled-in Julia set of $f$.  It is clear that $K(f)$ is a compact set. We say a point $z\in \Bbb C$ is a critical point of $f$ if $f'(z) = 0$.
Let $$\Omega_f = \{z\in \Bbb C\:|\: f'(z) = 0\}$$ denote the set of the critical points of $f$. We call
$$
P(f) = \bigcup_{n=1}^\infty f^n(\Omega(f))
$$ the post-critical set of $f$. It is known that
$K(f)$ is connected if and only if $P(f)$ is a bounded set.  We say $f$ is $\emph{post-critically finite}$, abbreviated as PCF,  if $P(f)$ is a finite set.
It follows that $K(f)$ is a connected set when $f$ is post-critically finite.

Suppose $K(f)$ is connected. Let $\Delta$ be the unit disk. Then there is a conformal isomorphism $\Phi: \Bbb C - K(f) \to \Bbb C - \overline{\Delta}$ so that the following diagram commutes.
$$
 \begin{CD}
        {\Bbb C} - K(f)         @  > \Phi     >  >    {\Bbb C} - \overline{\Delta}       \\
           @V  f  VV                         @VV z \mapsto z^d V\\
        {\Bbb C} - K(f)          @  >\Phi   >  >        {\Bbb C} - \overline{\Delta}
     \end{CD}
     $$
The family of the
radial rays $\{r e^{i \theta}\:|\: r > 1\}$ is invariant under the power map $z \mapsto z^d$. Thus the family of the rays
$$
\{R_\theta\} = \{\Phi^{-1} (r e^{i \theta})\:|\: r > 1\}
$$ is invariant under $f$. We call $\{R_\theta\}$  the external rays for $f$. So $f$ maps one external ray homeomorphically onto some other one.
Later we shall see that this property will be essentially used in our proof.

An external ray $R_\theta$  is called periodic of period $p$ if
$$
f^p(R_\theta) = R_\theta.
$$
By  Douady's theory (see Milnor's book, $\S18$), for each periodic repelling point, there is at least one periodic ray landing on it.

For PCF polynomials, the critical orbits are bounded and thus $K(f)$ is connected. Moreover, the dynamics of $f$ can be coded by finitely many combinatorial data of $f$ which are organized by a finite graph - the Hubbard tree for $f$.

Let us give a short description of the concept of Hubbard tree. The reader may refer to \cite{Po} for more details. Suppose $f$ is a PCF polynomial. Then there is a smallest finite tree which is forward invariant - the set of vertices and the set of edges are both finite, such that the tree is embedding into the filled-in Julia set and moreover, the set of vertices contains all the points in the post-critical set.

\subsection{Expanding and Attracting Edges for  Hubbard Trees}

  \begin{figure}[htbp]
 \centering
 \begin{minipage}[t]{0.40\textwidth}
   \includegraphics[width=\textwidth]{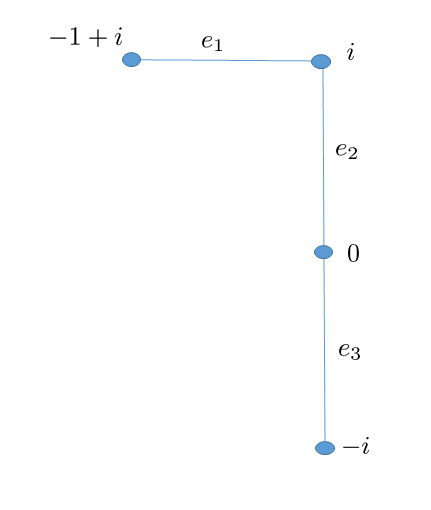}
   \caption{The topological Hubbard tree for $z^2 + i$}
   \label{fig:image1}
 \end{minipage}
 \hfill
 \begin{minipage}[t]{0.40\textwidth}
   \includegraphics[width=\textwidth]{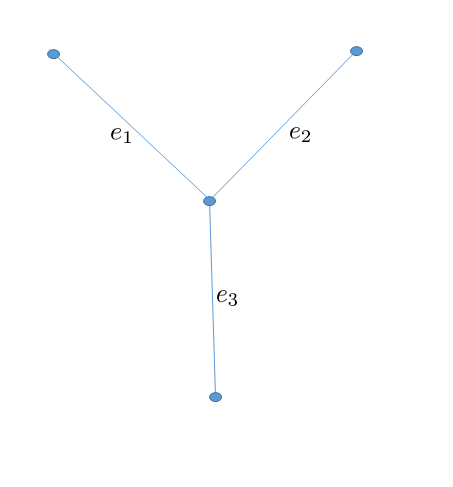}
   \caption{The topological Hubbard tree for the Douady-rabbit Julia set}
   \label{fig:image2}
 \end{minipage}
\end{figure}

The quadratic polynomial $f(z)=z^2+i$ is postcritically finite, with the critical orbit
\[
0 \to i \to -1+i \to -i \to -1+i.
\]
In most cases, we content ourselves with the topological Hubbard tree, where the edges of the Hubbard tree are replaced by straight line segments in the plane. The topological Hubbard tree for the above polynomial is illustrated in Figure~4.

For this example, a simple computation yields the transformation of the edges:
\[
e_1 \to e_1 + e_2 + e_3,\quad e_2 \to e_1,\quad e_3 \to e_1.
\]
Thus each edge eventually covers itself more than once after a number of iterations. That is, for each $e_i$, there are two disjoint subintervals of $e_i$, say $I$ and $J$, and an integer $n \ge 1$ such that $f^n: I \to e_i$ and $f^n: J \to e_i$ are homeomorphisms.  We call such an edge an \emph{expanding edge}. By definition, we clearly have the following proposition.

\begin{prop}\label{expd}
Suppose $e$ is an expanding edge. Then $e$ contains a repelling periodic point $z$ in the interior of $e$ with period $p$ so that   the following property holds: for any small interval neighborhood $I$ of $z$, there exist two disjoint subintervals $I_1 , I_2 \subset I$ with $z \in I_1$ and an integer $l \ge 1$ such that $f^{lp} : I_1 \to e$ and $f^{lp}: I_2\to e$ are homeomorphisms.
\end{prop}

There exist edges that are eventually mapped homeomorphically onto themselves. Since a PCF polynomial expands the orbifold metric, such an edge must have at least one of its endpoints being a super-attracting periodic point. We call such an edge an \emph{attracting edge}.

In the case of Douady's rabbit, we see the transformation of the edges:
\[
e_1 \to e_2,\quad e_2 \to e_3,\quad e_3 \to e_1.
\]
Hence each edge is an attracting edge (see Figure~5). There are Hubbard trees for which some edges are neither expanding nor attracting. Nevertheless, we have the following lemma.

\begin{lema}\label{even-p}
Suppose $e$ is an edge of a Hubbard tree. Then  there is a sub-interval $I$ of $e$ and an $n \ge 1$ such that $f^n$ maps $I$ homeomorphically onto an expanding edge or an attracting edge.
\end{lema}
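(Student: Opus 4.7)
The plan is to pigeonhole the orbit of $E$ into an $f^p$-invariant sub-arc, then use Thurston's orbifold expansion theorem together with a combinatorial transition-matrix argument to produce an expanding or attracting edge inside this sub-arc.

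First, since the finite tree $H$ has only finitely many sub-arcs, the sequence $\{f^n(E)\}_{n\ge 0}$ is eventually periodic: there exist $n_0 \ge 0$ and $p \ge 1$ with the arc $A := f^{n_0}(E)$ satisfying $f^p(A) = A$. It then suffices to produce an expanding or attracting edge inside $A$. To make the analysis work edge-by-edge, I would next pass to a sub-arc $A^* \subseteq A$ on whose vertex set $f^p$ acts trivially: take the eventual image $V^* = \bigcap_k f^{kp}(V(A))$, on which $f^p$ is a permutation, then replace $p$ by a suitable multiple so that $f^p|_{V^*}$ is the identity, and finally let $A^*$ be the smallest $f^p$-invariant sub-arc of $A$ containing $V^*$ (obtained by iterating the convex hull of $V^*$ until stabilization, and refining the vertex set of $H$ if necessary). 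After this reduction, every edge $E_i$ of $A^*$ has both endpoints fixed by $f^p$, so $f^p(E_i) \supseteq E_i$.

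Next, I would do a case analysis on each edge $E_i$ of $A^*$. In case (a), $f^p(E_i) = E_i$ and $f^p|_{E_i}$ is monotone; then $f^p|_{E_i}$ is a self-homeomorphism of the edge, and by Thurston's orbifold expansion theorem for PCF polynomials this forces one endpoint of $E_i$ to be a super-attracting periodic point, making $E_i$ an attracting edge. In case (b), $f^p(E_i) = E_i$ but $f^p|_{E_i}$ is non-monotone; then a fold inside $E_i$ supplies at least two monotone sub-intervals of $E_i$ mapped onto $E_i$, making $E_i$ an expanding edge. In case (c), $f^p(E_i) \supsetneq E_i$. If every edge of $A^*$ falls in case (c), let $N_{ij}$ denote the number of monotone sub-intervals of $E_i$ that are mapped homeomorphically onto $E_j$ by $f^p$. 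Then $N_{ii} = 1$ for each $i$, and the assumption that no edge is expanding gives $(N^2)_{ii} \le 1$, which forces $N_{ij} N_{ji} = 0$ for all $j \ne i$. Labeling the edges linearly $E_1, \ldots, E_r$ along the arc $A^*$, the leftmost edge $E_1$ can only extend rightward (it has no left neighbor in $A^*$), so $N_{1,2} \ge 1$ and hence $N_{2,1} = 0$; since $f^p(E_2)$ is a connected sub-arc containing $E_2$, the vanishing of $N_{2,1}$ forbids $E_2$ from extending leftward at all, so $E_2$ must extend rightward, giving $N_{2,3} \ge 1$. Inductively, $N_{i,i+1} \ge 1$ and $N_{i+1,i} = 0$ for all $i < r$. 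But then the rightmost edge $E_r$, which can only extend leftward, would need $N_{r,r-1} \ge 1$, contradicting $N_{r,r-1} = 0$. Hence some edge of $A^*$ is expanding or attracting, proving the lemma with $t = n_0$.

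The main obstacle is the reduction in the second step to a sub-arc on which $f^p$ fixes every vertex: vertices of $A$ outside $V^*$ are transient and cannot be made fixed by any power of $f^p$, so some care (for instance, refining the vertex set of $H$ to absorb preimages, or arguing on the coarser tree whose vertices are exactly $V^*$ and then translating back to genuine $H$-edges) is needed to arrange that the vertex set of $A^*$ coincides with $V^*$. The other essential ingredient, Thurston's orbifold expansion theorem invoked in case (a), is the standard but deep dynamical fact underlying the paper's whole framework for PCF polynomials.
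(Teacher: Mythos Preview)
Your approach is considerably more involved than the paper's, and it rests on an assumption that does not hold in general. You treat $A = f^{n_0}(E)$, and hence $A^*$, as an \emph{arc} with a linear ordering $E_1,\ldots,E_r$ of its edges. But $f^n(E)$ is in general only a connected \emph{subtree} of $H$; once some iterate of $E$ passes through a critical vertex the next image folds, and later iterates can acquire genuine branch points. The paper is careful to speak of subtrees throughout for exactly this reason. Your case~(c) contradiction depends essentially on the linear structure --- on the leftmost and rightmost edges having a unique neighbor, and on ``$f^p(E_i)$ extends beyond $E_i$'' meaning ``into the single next edge $E_{i+1}$''. At a branch vertex of $A^*$ an edge has several neighbors and the inductive chain $N_{i,i+1}\ge 1 \Rightarrow N_{i+1,i}=0 \Rightarrow N_{i+1,i+2}\ge 1$ has no meaning. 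The reduction you flag as delicate is genuinely problematic here too: the convex hull of $V^*$ inside a branching subtree is again a branching subtree, not an arc, so there is no way to manufacture the linear order your argument needs.

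The paper's proof avoids all of this with a short descent. Pigeonhole the sequence $\{f^k(E)\}$ among the finitely many subtrees of $H$ to obtain an $f^q$-invariant subtree $T'$. If $T'$ is a single edge it is expanding or attracting by definition and we are done. Otherwise either every edge of $T'$ has forward orbit eventually equal to all of $T'$ --- in which case each such edge is expanding --- or some edge of $T'$ has forward orbit confined to a strictly smaller invariant subtree $T''\subsetneq T'$, and one recurses on $T''$. No fixed-vertex normalization, no transition-matrix bookkeeping, and no orbifold-metric appeal beyond the definitions are needed.
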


\begin{proof}
Let $H$ be the Hubbard tree and $e$ be an edge of $H$. For each $k \ge 1$, the set $H_k = f^k(e)$ is connected and thus a subtree of $H$. Since $H$ contains only finitely many distinct subtrees, there exists a subsequence $\{k'\}$ such that all $H_{k'}$ are the same subtree $H'$. If $H'$ consists of a single edge, then that edge must be either expanding or attracting.

Otherwise, repeat the same process for each edge of $H'$. Then either the image of every edge of $H'$ under iteration of $f$ eventually covers the whole $H'$, or there exists an edge $e'$ of $H'$ and a subsequence $\{k''\}$ such that $f^{k''}(e') = H''$ is a proper subset of $H'$. In the first case, every edge in $H'$ is an expanding edge. In the second case, repeat the same procedure for $H''$. Since $H$ is a finite set, we must eventually reach either a subtree whose edges are all expanding, or a subtree consisting of exactly one edge, which is either attracting or expanding.
\end{proof}

\section{A specific $\mathcal{F}$}

The next lemma implies that we need only prove the main theorem for
some specific $\mathcal{F}$.

\begin{lema}\label{df} Suppose $\mathcal{F}$ and $\mathcal{F}'$ are two admissible family of separating arcs. Then for all simple closed curve $C \in \Bbb C - P_f$,
\[
N_{\mathcal{F}}(C) \asymp N_{\mathcal{F}'}(C),
\]
with the constant depending only on $\mathcal{F}$ and $\mathcal{F}'$.
\end{lema}

\begin{proof}
First note that deforming the separating arcs in $\mathcal{F}$ and $\mathcal{F}'$ by homotopy in $\mathbb{C} \setminus P_f$ does not affect $N_{\mathcal{F}}(\cdot)$ and $N_{\mathcal{F}'}(\cdot)$. Thus we may assume all arcs in $\mathcal{F}$ and $\mathcal{F}'$ are smooth.
We may further assume that $C$ minimizes the complexity with respect to $\mathcal{F}'$.
Let $U_i$, $1 \le i \le m$, denote the components of $\mathbb{C} \setminus \mathcal{F}'$.
Let $\{\Gamma_j\}$ be all the arc components of $C \cap U_i$ for $1 \le i \le m$.
Then $N_{\mathcal{F}'}(C)$ equals the number of elements in $\{\Gamma_j\}$.

Now, by deforming the separating arcs in $\mathcal{F}$ by homotopy in $\mathbb{C} \setminus P_f$ if necessary, we may assume that for each $L \in \mathcal{F}$, the number of arc components of $L \cap U_i$ is finite, say $k_L^i$.
Set
\[
K = \max_{L \in \mathcal{F},\; 1 \le i \le m} k_L^i.
\]
For each $\Gamma_j$, by deforming $\Gamma_j$ in $\mathbb{C} \setminus P_f$ with its two endpoints fixed, the intersection number of $\overline{\Gamma_j}$ with $L$ is at most $2K$.
Here $\overline{\Gamma_j}$ is the union of $\Gamma_j$ and its endpoints.
This holds because $U_i$ contains at most one postcritical point; up to homotopy in $\mathbb{C} \setminus P_f$ fixing the endpoints, the intersection number of $\overline{\Gamma_j}$ with the closure of each arc component of $L \cap U_i$ is at most two.
Consequently,
\[
N(L,C) \le 2K \, N_{\mathcal{F}'}(C),
\]
and therefore
\[
N_{\mathcal{F}}(C) \le  \sum_{L\in\mathcal{F}} N(L,C) \le 2\,|\mathcal{F}|\,K\, N_{\mathcal{F}'}(C),
\]
where $|\mathcal{F}|$ denotes the number of separating arcs in $\mathcal{F}$.
The opposite inequality is proved in the same way.
\end{proof}

Before we specify an admissible family $\mathcal{F}$, we need some notions regarding the Hubbard tree.

Suppose $H$ is the Hubbard tree for $f$.
By Lemma~\ref{even-p}, for every edge $e$ of $H$, there exists $t \ge 1$ such that $f^t(e)$ covers either an expanding edge or an attracting edge.

There are two cases.

In the first case,
$f^t(e)$ covers an expanding edge. In this case, let $z$ be the repelling periodic point in the interior of the expanding edge whose existence is guaranteed by Proposition~\ref{expd}. Then there are two periodic rays landing on $z$, say $R_1$ and $R_2$, such that $R_1 \cup \{z\} \cup R_2$ separates the expanding edge. Let $x$ be an interior point of $e$ with $f^t(x)=z$. Pulling back $R_1$ and $R_2$ by $f^t$ yields two external rays landing on $x$, denoted $\tilde{R}_1$ and $\tilde{R}_2$. Now define the separating arc $L_e$ associated to $e$ as $\tilde{R}_1 \cup \{x\} \cup \tilde{R}_2$. Clearly, $L_e$ separates $e$.

In the second case,
$f^t(e)$ covers an attracting edge. Then there exists a point $x \in e$ that is mapped into some super-attracting cycle by $f^t$. We then define $L_e$ as the union of two (pre-)internal rays starting from $x$ and two external rays extending them respectively, so that $f^t(L_e)$ is the union of two periodic internal rays starting from $f^t(x)$ and two periodic external rays extending those internal rays, and moreover, $f^t(L_e)$ does not contain any other post-critical point except $f^t(x)$.

Now, by taking sufficiently many such separating arcs, (and in particular, we may need to take more than one separating arc for each edge in the second case), we obtain that
\[
\mathcal{F} = \{L_e\}
\]
is an admissible family.

Let $C \subset \mathbb{C} \setminus P_f$ be a simple closed curve and let $C'$ be a component of $f^{-1}(C)$.
Note that for each $L \in \mathcal{F}$, the image $f(L)$ is the union of two external rays and thus a separating arc.
The same argument as in the proof of Lemma~\ref{df} shows that
\[
N(C, f(L)) \preceq N_{\mathcal{F}}(C),
\]
with the constant depending only on $\mathcal{F}$.
Since $f$ maps $L$ homeomorphically onto $f(L)$, it follows that
\[
N(C', L) \le N(C, f(L)).
\]
Combining these estimates yields the following lemma.

\begin{lema}\label{w-1}
Suppose $C \subset \mathbb{C} \setminus P_f$ is a simple closed curve and $C'$ is a component of $f^{-1}(C)$. Then
\[
N_{\mathcal{F}}(C') \preceq N_{\mathcal{F}}(C),
\]
where the implied constant depends only on $\mathcal{F}$.
\end{lema}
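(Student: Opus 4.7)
My plan is to combine the two observations already singled out in the discussion preceding the lemma and formalise them carefully. The key structural fact I would verify first is that, for each $L = L_E$ in $\mathcal{F}$, the restriction $f|_L : L \to f(L)$ is a homeomorphism. By construction in the previous subsection, $L$ consists of finitely many external rays (and possibly two internal rays, in the attracting case) meeting at a single point $x$ lying in the interior of an edge of the Hubbard tree, chosen so as not to be critical. Because $f$ sends every external ray homeomorphically onto another external ray and every internal ray homeomorphically onto another internal ray, and because there is no folding at $x$, the restriction $f|_L$ is indeed a homeomorphism onto $f(L)$. Consequently every intersection point of $C'$ with $L$ injects, via $f$, into an intersection point of $C$ with $f(L)$, which gives
\[
N(C',L)\;\le\;N(C, f(L)).
\]

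Next I would observe that $f(L)$ is itself a separating arc in $\C$, though in general not a member of $\mathcal{F}$. After a homotopy in $\C\setminus P_f$, the intersection of $f(L)$ with any complementary region $U_i$ of $\bigcup_{L'\in\mathcal{F}} L'$ consists of only finitely many arc components, with a bound depending on $f$ and $\mathcal{F}$ but not on $C$. This is precisely the situation handled in the proof of Lemma~\ref{df}: each $\mathcal{F}$-complementary region contains at most one post-critical point, so, up to homotopy in $\C\setminus P_f$ with endpoints fixed, each arc component of $C$ trapped in $U_i$ meets the closure of any fixed arc component of $f(L)\cap U_i$ in at most two points. Applying that argument verbatim yields
\[
N(C, f(L))\;\le\;2K_{f(L)}\,N_{\mathcal{F}}(C),
\]
with $K_{f(L)}$ depending only on $f$ and $\mathcal{F}$.

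Summing over the finite family $\mathcal{F}$ gives
\[
N_{\mathcal{F}}(C')\;\le\;\sum_{L\in\mathcal{F}}N(C',L)\;\le\;\sum_{L\in\mathcal{F}}N(C,f(L))\;\preceq\;N_{\mathcal{F}}(C),
\]
where the first inequality uses that $N_{\mathcal{F}}(C')$ is defined as a minimum over the homotopy class of $C'$ in $\C\setminus P_f$ (so the particular representative $C'$ itself provides an upper bound), and the implicit constant in $\preceq$ depends only on $f$ and $\mathcal{F}$.

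The only genuinely delicate point in the argument is the homeomorphism claim $f|_L \cong f(L)$; this relies on the careful choice made in the construction of $L_E$, namely that the landing point $x$ sits in the interior of an edge and is not a critical point, together with the fact that $f$ maps external (respectively internal) rays homeomorphically onto their images. Once this is in place, the rest of the proof is a straightforward bookkeeping exercise built on top of Lemma~\ref{df}.
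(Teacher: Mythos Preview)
Your proposal is correct and follows essentially the same route as the paper: the paper's proof is actually contained in the paragraph immediately preceding the lemma (the sentences ending ``All of these implies''), which records precisely the two ingredients you use, namely $N(C,f(L))\preceq N_{\mathcal F}(C)$ via the argument of Lemma~\ref{df}, and $N(C',L)\le N(C,f(L))$ since $f|_L$ is a homeomorphism onto $f(L)$. You supply more detail than the paper on why $f|_L$ is injective and you correctly note that in the attracting-edge case $f(L)$ also contains internal rays (the paper's sketch says only ``the union of two external rays''), but the structure of the argument is identical.
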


By the construction of $\mathcal{F}$, for each $L \in \mathcal{F}$ there exists an integer $t_L \ge 1$ such that $f^{t_L}(L)$ is periodic.
Let $\mathcal{O}(f^{t_L}(L))$ denote the periodic cycle containing $f^{t_L}(L)$ and set
\[
\mathcal{F}_0 = \bigcup_{L \in \mathcal{F}} \mathcal{O}(f^{t_L}(L)).
\]
In general, the family $\mathcal{F}_0$ does not satisfy the separation property required in the definition of an admissible family.
Nevertheless, each $L \in \mathcal{F}_0$ is periodic and is associated to some expanding or attracting edge.
Let
\begin{equation}\label{tc}
t = \max\{t_L\}.
\end{equation}
Then $f^t$ maps each $L \in \mathcal{F}$ homeomorphically onto some element of $\mathcal{F}_0$.
Since all elements of $\mathcal{F}_0$ are periodic and $f$ maps an element of $\mathcal{F}_0$ homeomorphically onto another element of $\mathcal{F}_0$, it follows that the complexity with respect to $\mathcal{F}_0$ does not increase under pullback.

\begin{lema}\label{add-m}
For any simple closed curve $C \subset \mathbb{C} \setminus P_f$ and any component $C'$ of $f^{-1}(C)$, we have
\[
N_{\mathcal{F}_0}(C') \le N_{\mathcal{F}_0}(C).
\]
\end{lema}

\begin{lema}\label{w-2}
Suppose $C \subset \mathbb{C} \setminus P_f$ is a non-peripheral curve and $C'$ is a component of $f^{-t}(C)$ with $t = \max\{t_L\}$. Then
\[
N_{\mathcal{F}}(C') \preceq N_{\mathcal{F}_0}(C) \preceq N_{\mathcal{F}}(C),
\]
where the implied constant depends only on $\mathcal{F}$ and $\mathcal{F}_0$.
\end{lema}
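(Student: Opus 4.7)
The plan is to prove the two inequalities separately, adapting the strategy of the preceding two lemmas.

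For the first, $N_{\mathcal{F}}(C') \le N_{\mathcal{F}_0}(C)$, I would exploit two features built into the construction: for every $L \in \mathcal{F}$ the arc $f^t(L)$ lies in $\mathcal{F}_0$, and $f^t$ is a homeomorphism of $L$ onto $f^t(L)$ because $f$ sends each constituent external (and internal) ray homeomorphically onto another such ray; moreover the assignment $L \mapsto f^t(L)$ was arranged to be injective at the end of the construction of $\mathcal{F}$. I would choose a representative $\gamma \sim C$ in $\Bbb C - P_f$ realizing $N_{\mathcal{F}_0}(C) = \sum_{L'' \in \mathcal{F}_0} N(\gamma, L'')$, lift the homotopy through the unramified covering $f^t : \Bbb C \setminus f^{-t}(P_f) \to \Bbb C \setminus P_f$ to obtain the component $\gamma'$ of $f^{-t}(\gamma)$ homotopic to $C'$ in $\Bbb C - P_f$, and then use the injection $\gamma' \cap L \hookrightarrow \gamma \cap f^t(L)$ induced by $f^t$ together with the injectivity of $L \mapsto f^t(L)$ to deduce
\[
N_{\mathcal{F}}(C') \le \sum_{L \in \mathcal{F}} N(\gamma', L) \le \sum_{L \in \mathcal{F}} N(\gamma, f^t(L)) \le \sum_{L'' \in \mathcal{F}_0} N(\gamma, L'') = N_{\mathcal{F}_0}(C).
\]

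For the second inequality $N_{\mathcal{F}_0}(C) \preceq N_{\mathcal{F}}(C)$, my approach is to copy the proof of Lemma~\ref{df} almost verbatim, with $\mathcal{F}$ in the role of the admissible base family; the argument there only uses admissibility of the base family, so the failure of $\mathcal{F}_0$ itself to be admissible is harmless. I would select $\gamma \sim C$ realizing $N_{\mathcal{F}}(C)$, let $U_i$ be the components of $\Bbb C \setminus \bigcup_{L \in \mathcal{F}} L$ (each containing at most one post-critical point), and observe that since each $L \in \mathcal{F}$ and each $L' \in \mathcal{F}_0$ is a union of finitely many external or internal rays, mutually disjoint except at finitely many common landing or branch points, every $L' \cap U_i$ already has only finitely many components $k_{L'}^i$. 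Following Lemma~\ref{df}, inside each $U_i$ every arc of $\gamma$ can be homotoped (endpoints fixed) to meet each component of $L' \cap U_i$ at most twice, yielding $N(\gamma, L') \le 2 K_{L'}\, N_{\mathcal{F}}(C)$ with $K_{L'} = \max_i k_{L'}^i$. Summing over the finite family $\mathcal{F}_0$ then gives the asserted bound with constant $2 |\mathcal{F}_0| \max_{L' \in \mathcal{F}_0} K_{L'}$, depending only on $\mathcal{F}$ and $\mathcal{F}_0$.

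Once the two points that (i) $f^t$ restricts to a homeomorphism on each $L \in \mathcal{F}$ and (ii) the lifted homotopy produces $\gamma'$ in the correct free homotopy class as $C'$ are in hand—both immediate from the ray description of $\mathcal{F}$ and from standard homotopy lifting through the unramified covering $f^t : \Bbb C \setminus f^{-t}(P_f) \to \Bbb C \setminus P_f$—the rest of the argument is pure bookkeeping. The main conceptual content has already been packaged in Lemma~\ref{df} and in the construction of $\mathcal{F}$ and $\mathcal{F}_0$ at the start of this section.
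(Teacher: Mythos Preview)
Your proposal is correct and follows the same approach as the paper. The paper's own proof is a two-sentence sketch---``the first inequality follows since $f^t:\mathcal{F}\to\mathcal{F}_0$ is injective and $f^t$ maps each $L$ in $\mathcal{F}$ homeomorphically onto $f^t(L)$ in $\mathcal{F}_0$; the second follows by the same argument as in the proof of Lemma~\ref{df}''---and you have simply filled in the details behind both sentences (the homotopy-lifting bookkeeping for the first, and the piece-by-piece intersection bound for the second).
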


\begin{proof}
The first inequality holds because $f^t$ maps each $L \in \mathcal{F}$ homeomorphically onto $f^t(L) \in \mathcal{F}_0$, and because $N(C', L) \le N(C, f^t(L)) \le N_{\mathcal{F}_0}(C)$. The second inequality follows from Lemma~\ref{df}, since one can add more separating arcs to $\mathcal{F}_0$ to make it an admissible family.
\end{proof}

Note that $\mathcal{F}$ is a specific family, so all constants depending on $\mathcal{F}$ and hence on $\mathcal{F}_0$ ultimately depend only on $f$. The main theorem follows once we prove the following lemma.

\begin{lema}\label{final-re}
There exist $q \ge 1$, $\delta > 0$, $A > 0$, and $D > 0$, all depending only on $f$, such that for any simple closed curve $C \subset \mathbb{C} \setminus P_f$ and any $k \ge 1$,
\[
N_{\mathcal{F}_0}(\eta) \le A N_{\mathcal{F}_0}(C) e^{-k\delta} + D,
\]
where $\eta$ is any component of $f^{-kq}(C)$.
\end{lema}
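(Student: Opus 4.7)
The plan is to reduce the statement to a \emph{dichotomy after a bounded number of iterations} of $F = f^{lp}$ and then induct. Set $q = lp$ as in the idea-of-proof section, so that each $L \in \mathcal{F}_0$ has its homotopic companion $L'$ bounding a post-critically empty strip with $F(L') = L$. The key claim I would establish is: there exist constants $\beta \in (0,1)$, $N_0 \in \Bbb N$ and $D_0 > 0$, all depending only on $f$, such that for every simple closed curve $\gamma \subset \Bbb C - P_f$ with $N_{\mathcal{F}_0}(\gamma) > D_0$ and every component $\gamma'$ of $F^{-N_0}(\gamma)$, one has $N_{\mathcal{F}_0}(\gamma') \le \beta\, N_{\mathcal{F}_0}(\gamma)$. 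Granting this, the lemma follows by induction: while $N_{\mathcal{F}_0}$ exceeds $D_0$ each block of $N_0$ iterations of $F$ shrinks it by the factor $\beta$, yielding exponential decay with rate $\delta = -(\log \beta)/(N_0 q)$; once $N_{\mathcal{F}_0}$ drops below $D_0$, Lemma~\ref{w-1} keeps it uniformly bounded by a constant $D$ depending only on $f$.

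To prove the dichotomy, fix $\gamma$ and consider the pullback tower $\gamma = \gamma^0, \gamma^1, \ldots, \gamma^{N_0}$, where $\gamma^{k+1}$ is a component of $F^{-1}(\gamma^k)$. At each level pick $L_k \in \mathcal{F}_0$ with $N(\gamma^k, L_k) \ge \tfrac12 \max_{\tilde L \in \mathcal{F}_0} N(\gamma^k, \tilde L)$. If at some $k$ the hypotheses (\ref{A-1})--(\ref{A-4}) of Lemma~\ref{ann-1} fail for $\gamma^k$, the quick-return and barrier-lake estimates (Lemmas~\ref{pa}, \ref{obs-1}, \ref{obs-2}, \ref{QR}) force $N(\gamma^{k+1}, L_k) \le (1-\epsilon_0)\, N(\gamma^k, L_k)$ for some $\epsilon_0 > 0$ determined by the fixed choices of $\epsilon$ and $C_0$. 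Combined with the near-maximality of $L_k$, this yields $N_{\mathcal{F}_0}(\gamma^{k+1}) \le \beta_1\, N_{\mathcal{F}_0}(\gamma^k)$ with $\beta_1 < 1$ independent of $\gamma$, and Lemma~\ref{w-1} then propagates this decrease through the remaining $N_0 - k - 1$ pullbacks.

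The remaining case is that (\ref{A-1})--(\ref{A-4}) hold at every $k \le N_0 - 1$. I would apply Lemma~\ref{ann-1} to both $\gamma^0$ and $\gamma^{N_0}$: each annulus $A_i$ of $\gamma^0$ pulls back into finitely many sub-annuli $A_{i,j} \subset \gamma^{N_0}$, and by Lemma~\ref{e-num} the total number of efficient spirals in $\bigcup_{i,j} A_{i,j}$ is bounded by a constant $E = E(f)$ independent of $N_0$ and $\gamma$. Since the per-loop $\mathcal{F}_0$-complexity is uniformly bounded by the last assertion of Lemma~\ref{ann-1}, each loop has hyperbolic length bounded above and below by constants depending only on $f$, so the strict contraction of the hyperbolic metric of $\Bbb C - P_f$ by $F$ makes the total hyperbolic length of efficient loops in $\bigcup_{i,j} A_{i,j}$ at most $\rho^{N_0}$ times that of $\bigcup_i A_i$, for some $\rho = \rho(f) < 1$. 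Converting lengths back to intersection counts via the two-sided bound $2 \le N(\text{loop}, L) \le C_0$, and absorbing the $O((\epsilon + d^{lp}/C_0 + |P_f|\,\epsilon\, C_0^2)\, N_{\mathcal{F}_0}(\gamma^0))$ error from Lemma~\ref{ann-1}, gives $N_{\mathcal{F}_0}(\gamma^{N_0}) \le \beta_2\, N_{\mathcal{F}_0}(\gamma^0)$ for a definite $\beta_2 < 1$, provided $\epsilon$, $C_0^{-1}$, and $d^{lp}/C_0$ are first chosen small and then $N_0$ is chosen large enough that $\rho^{N_0} C_0 E$ is negligible.

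The main obstacle is precisely this quantitative passage from hyperbolic contraction to a bound on the \emph{number} of efficient loops. Hyperbolic contraction alone shrinks only fixed homotopy classes, so without Lemma~\ref{e-num} an annulus's lift could a priori contain unboundedly many spirals and the total intersection count need not decrease at all. The combination of Lemma~\ref{ann-1} (bounded per-loop complexity, hence bounded per-loop hyperbolic length) and Lemma~\ref{e-num} (uniformly many efficient spirals in the lift of a single annulus, independently of how many times one iterates) is exactly what closes the argument quantitatively. Setting $\beta = \max(\beta_1, \beta_2)$ and taking $D$ slightly larger than $D_0$ to absorb the degree of $F$ between block boundaries then yields the desired exponential bound.
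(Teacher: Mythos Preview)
Your outline mirrors the paper's: reduce to a bridge lemma (definite contraction of $N_{\mathcal{F}_0}$ after a bounded block of $F$-pullbacks, the paper's Lemma~\ref{Bridge-Prop}), induct, and for the bridge lemma run the dichotomy on (\ref{A-1})--(\ref{A-4}) with Lemma~\ref{ann-1} applied at both ends, closing via hyperbolic contraction of winding numbers together with Lemma~\ref{e-num}.

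There is one real slip. You invoke Lemma~\ref{e-num} as bounding the total number of efficient spirals in $\bigcup_{i,j} A_{i,j}$ by a constant $E=E(f)$ independent of $\gamma$. The lemma says no such thing: it only gives $M' \le M$, where $M$ and $M'$ are the maximal nest lengths for the original annulus $A$ and for a pullback annulus $A_i$, and $M$ can be arbitrarily large with $\gamma$. What makes the paper's endgame work is that $M$ also bounds the spiral count in $A$ from \emph{below} (via $N(A,L)\ge 4M\,W(S)$), while the efficient spiral count in each $A_i$ is at most $2|P_f|\,M$; the $M$'s cancel and one obtains $\sum_i N(A_i,L)/N(A,L)\le \tfrac12\, C_0 K |P_f|^2\,2^{-\kappa m}$, contradicting (\ref{im}) for $m$ large. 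Your hyperbolic-length accounting hides this cancellation and, as stated, rests on a false uniform bound.

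Two smaller points. Once $N_{\mathcal{F}_0}$ falls below $D_0$, its non-increase under a single $f$-pullback follows directly from the periodicity of $\mathcal{F}_0$ (opening of \S4), not from Lemma~\ref{w-1}. And the paper applies Lemma~\ref{ann-1} to $C^t$ rather than to the initial curve, precisely so that Lemma~\ref{bc} can convert the $\mathcal{F}_0$-bound on per-loop complexity into an $\mathcal{F}$-bound, which is what actually yields the upper bound on hyperbolic length; your passage from bounded $\mathcal{F}_0$-complexity straight to bounded hyperbolic length skips this step.
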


In fact, by Lemma~\ref{df}, it suffices to prove the main theorem for the specific family $\mathcal{F}$. Now suppose $\eta$ is a component of $f^{-(kq + t)}(C)$ with $t$ be the integer in (\ref{tc}) and  $k \ge 1$. Then $f^t(\eta)$ is a component of $f^{-kq}(C)$. Assuming Lemma~\ref{final-re} and using the first inequality of Lemma~\ref{w-2}, we obtain
\[
N_{\mathcal{F}}(\eta) \preceq N_{\mathcal{F}_0}(f^t(\eta)) \le A N_{\mathcal{F}_0}(C) e^{-k\delta} + D = A N_{\mathcal{F}_0}(C) e^{-(\delta/q)(kq)} + D.
\]
By Lemma~\ref{w-1} and by multiplying $A$ and $D$ by a number depending only on $t$, $q$, and $\delta$ (all of which depend only on $f$), still denoted by $A$ and $D$ respectively, we get
\[
N_{\mathcal{F}}(\eta) \le A N_{\mathcal{F}_0}(C) e^{-(\delta/q)(qk + t + i)} + D
\]
for any component $\eta$ of $f^{-(qk + t + i)}(C)$ with $k \ge 1$ and $0 \le i \le q-1$. That is, there exist $A, D \ge 1$ depending only on $f$ such that for all $n \ge t+q$ and any component $\eta$ of $f^{-n}(C)$,
\[
N_{\mathcal{F}}(\eta) \le A N_{\mathcal{F}_0}(C) e^{-n \delta'} + D,
\]
where $\delta' = \delta/q$.

By the second inequality of Lemma~\ref{w-2} and by multiplying $A$ by a number depending only on $f$ (still denoted $A$), the above yields
\[
N_{\mathcal{F}}(\eta) \le A N_{\mathcal{F}}(C) e^{-n \delta'} + D
\]
for all $n \ge   t+q$. By Lemma~\ref{w-1} and by multiplying $A$ and $D$ by some factor depending only on $f$, we can ensure that the inequality also holds for all $1 \le n < t+q$. This proves the main theorem.

Lemma~\ref{final-re} can be further reduced to the following.

\begin{lema}\label{Bridge-Prop}
There exist $D > 0$, $0 < \delta < 1$, and an integer $m \ge 1$, all depending only on $f$, such that whenever $N_{\mathcal{F}_0}(C) > D$, we must have
\[
N_{\mathcal{F}_0}(C^m) < \delta N_{\mathcal{F}_0}(C),
\]
where $C^m$ is any component of $f^{-m}(C)$.
\end{lema}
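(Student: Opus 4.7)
The plan is to realize the step-down promised by the lemma via the combinatorial strategy outlined in the introduction, applied simultaneously at two pullback levels $t$ and $m$. Throughout I choose $L \in \mathcal{F}_0$ with $N(C,L)\ge \frac{1}{2}\max_{\tilde L \in \mathcal{F}_0} N(C,\tilde L)$; since $|\mathcal{F}_0|$ depends only on $f$, a definite decrease of $N(C^m,L)$ is enough to give a definite decrease of $N_{\mathcal{F}_0}(C^m)$. Fix $F = f^{lp}$, the parallel arc $L'$ cobounding a strip free of post-critical points, and thresholds $\epsilon>0$ and $C_0>1$ with $\max\{d^{lp}/C_0,\,\epsilon C_0^2\}\ll 1/|P_f|$. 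The integers $t$, $m$ and the contraction factor $\delta$ will be tuned at the end.

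First, I run the dichotomy on intermediate levels. For each $i$ with $0\le i<m$, either $\min_\Gamma N(C^{i+1},L)\le (1-\epsilon)N(C^i,L)$, in which case a bounded number of such drops already delivers the lemma, or (\ref{ccdd}) holds and the quick-return/barrier-lake machinery of Lemmas~\ref{obs-1}, \ref{obs-2} and \ref{lp1} applies. In the latter regime the hypotheses of Lemma~\ref{ann-1} are met, so I may apply that lemma to $C^t$, obtaining an annular decomposition of the bulk of $C^t$ into at most $|P_f|$ disjoint annuli $A_1,\ldots,A_n$, each a union of long spirals of a common homotopy class, with loop-complexity bounded by a constant $K=K(\epsilon,C_0)$ depending only on $f$.

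Next, I lift. Apply Lemma~\ref{ann-1} also at level $m$; by the polynomial mapping property, each $A_i$ pulls back under $F^{(m-t)/(lp)}$ to a union of sub-annuli in the annular part of $C^m$, and a long spiral of $A_i$ is lifted to a long spiral in each such sub-annulus. Naive loop counting fails because one upstairs spiral produces many downstairs. Two ingredients combine. Since the loops in $A_i$ and its lifts have combinatorics uniformly bounded by $K$ and lie in $\Bbb C\setminus P_f$, each application of $F^{-1}$ contracts them in the hyperbolic metric of $\Bbb C\setminus P_f$ by a definite factor $\rho<1$ depending only on $f$, so the number of loops in any single lifted spiral decays like $\rho^{(m-t)/(lp)}$. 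On the other hand, Lemma~\ref{e-num}, which exploits the polynomial mapping property, bounds the number of \emph{efficient} sub-annuli produced from a single $A_i$ independently of $m$.

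Combining these, each efficient loop contributes between $2$ and $C_0$ to $N(C^m,L)$, so the annular part of $C^m$ contributes at most
\[
C_0 \cdot (\text{uniform count of efficient spirals}) \cdot \rho^{(m-t)/(lp)} \cdot N(C,L),
\]
while the non-annular part contributes $O\!\bigl((\epsilon + d^{lp}/C_0 + |P_f|\epsilon C_0^2)\,N(C,L)\bigr)$ by the second estimate of Lemma~\ref{ann-1}. Fix $\epsilon$, $C_0$ so the non-annular error is at most $N(C,L)/4$, then take $m$ large enough to force the annular term below $N(C,L)/4$; this yields $N(C^m,L)\le N(C,L)/2$, and after reinstating the dominance of $L$ and absorbing the bounded additive terms into $D$, the lemma follows with some $\delta<1$. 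The main obstacle is the lifting step: coupling the hyperbolic contraction on loops with the $m$-independent efficient-spiral count of Lemma~\ref{e-num}, uniformly across the homotopy types of the $A_i$, is the heart of the argument and is precisely where the polynomial structure of $f$ becomes essential, subsuming the hyperbolic case treated by Nekrashevych-type methods.
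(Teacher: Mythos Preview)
Your proposal is correct and follows essentially the same strategy as the paper's own proof in \S7: run the dichotomy (definite drop vs.\ barrier-lake structure), apply Lemma~\ref{ann-1} at two levels $t$ and $m$, lift the dominant annulus of $C^t$, then combine the exponential hyperbolic contraction of winding numbers (Lemma~\ref{WS}) with the $m$-independent bound from Lemma~\ref{e-num} and the $2$-to-$C_0$ loop/intersection bounds to force a contradiction with the assumed near-preservation of $N(C^m,L)$. One small imprecision: Lemma~\ref{e-num} bounds the \emph{nest depth} $M'$ of each lifted annulus by the nest depth $M$ of $A$, not the number of sub-annuli; the $m$-independent count of efficient spirals you need then comes from combining $M'\le M$ with the separate bounds $s<|P_f|$ on the number of annuli and $\le |P_f|$ on the number of nests per annulus, exactly as the paper does in its final displayed estimate.
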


Let  $C^0 = C$ and $\{C^n\}_{n\ge 1}$ be a sequence of curves so that $C^{k+1}$ is the pull back of $C^k$ by $f$. Now assume  Lemma~\ref{Bridge-Prop} and let $l \ge 0$ be the largest integer such that $N_{\mathcal{F}_0}(C^{lm}) > D$. Then for $1 \le k \le l+1$,
\[
N_{\mathcal{F}_0}(C^{km}) < \delta^k N_{\mathcal{F}_0}(C).
\]
For $n = km + i$ with $0 \le k \le l$ and $0 \le i \le m-1$, we have $k = (n-i)/m > (n-m)/m = -1 + n/m$, and thus $\delta^k < \delta^{-1} \delta^{n/m}$. Hence
\[
N_{\mathcal{F}_0}(C^{n}) \le N_{\mathcal{F}_0}(C^{km}) < \delta^{-1} \delta^{n/m} N_{\mathcal{F}_0}(C).
\]
For all $n \ge (l+1)m$, we have $N_{\mathcal{F}_0}(C^{n}) \le D$ by Lemma~\ref{add-m}. Lemma~\ref{final-re} follows.

\section{Quick  returns}

We have already seen in Lemma~\ref{add-m} that the complexity with respect to $\mathcal{F}_0$ is not increased by pullback.
The starting point of this work is based on an observation: when $N_{\mathcal{F}_0}(C)$ is large, it must become smaller after a definite number of pullbacks; see Lemma~\ref{pa}.

To see this, take $L \in \mathcal{F}_0$. By the construction of $\mathcal{F}_0$ in \S4, there are two cases.
In the first case, $L$ is the union of two rays landing on some repelling periodic point in the interior of an expanding edge $e$ of the Hubbard tree and separates $e$.
In the second case, $L$ is the union of two periodic internal rays starting from some super-attracting periodic point and two periodic external rays extending them, respectively.

Suppose we are in the first case, i.e., $L = R_1 \cup \{z\} \cup R_2$, where $R_1$ and $R_2$ are two periodic external rays of period $p$ landing at a repelling periodic point $z$ on an expanding edge $e$, and $L$ separates $e$.
Let $R_1'$ and $R_2'$ be two rays which land at some point $z' \in e$ near $z$ and are mapped to $R_1$ and $R_2$ by $f^{lp}$, respectively, where $l \ge 1$ is an integer, and moreover $f^{lp}$ preserves the orientation at $z'$.
This means that $z'$ and $z$ lie in the interior of the same edge $e$ and $f^{lp}$ maps a small edge neighbourhood of $z'$ to a small edge neighbourhood of $z$ preserving the orientation.
In fact, by the choice of $z$ and Proposition~\ref{expd}, there exist two disjoint edge intervals $I$ and $S$ in $e$ such that $z \in I$ and both $I$ and $S$ are mapped homeomorphically onto $e$ by $f^{lp}$ for some $l \ge 1$.
If $f^{lp}: S \to e$ preserves the orientation, we may take a point $z' \in S$ such that $f^{lp}(z') = z$.
Otherwise, there is a subinterval of $S$, say $T$, which is mapped homeomorphically onto $S$ by $f^{lp}$. Then $f^{2lp}$ maps $T$ homeomorphically onto $e$ with preserved orientation.
In this case we take a point $z' \in T$ such that $f^{2lp}(z') = z$.
Thus $R_1$ and $R_1'$ lie on one side of the edge $e$, and $R_2$ and $R_2'$ lie on the other side.
Set $L' = R_1' \cup \{z'\} \cup R_2'$. It is clear that the strip bounded by $L$ and $L'$ contains no point of $P_f$. See the left-hand side of Figure~1 for an illustration of $L$ and $L'$.

Now suppose we are in the second case. Then $L$ is the union of two periodic internal rays starting from some super-attracting periodic point $w$ and two periodic external rays extending them.
Let $a$ and $b$ be the two intersection points of $L$ and $\partial D$, where $D$ is the immediate basin of $w$, which is a Jordan domain.
Suppose $p$ is the period of $L$, which must be an integer multiple of the period of $w$.
Since $f^p: \partial D \to \partial D$ is conjugate to $z \mapsto z^k$ on the unit circle, with $k \ge 2$ an integer, we can choose $a', b' \in \partial D$ close to $a$ and $b$, respectively, such that $f^{lp}(a') = a$ and $f^{lp}(b') = b$ for some large $l$, and moreover $a'$ and $b'$ lie on the same side of $L$.
We then define $L'$ as the union of the two internal rays starting from $w$ and landing at $a'$ and $b'$, together with the two external rays extending them, so that $f^{lp}(L') = L$.
Clearly, in this case we can also assume that the two sectors bounded by $L$ and $L'$ contain no points of $P_f$. See Figure~6 for an illustration of $L$ and $L'$.

Now, for both cases, let $F = f^{lp}$. It follows that $F(L) = F(L') = L$. To fix the idea, we illustrate the quick returns below  by assuming we are in the first case; the idea is easily adapted to the second case.

   \begin{figure}[!htpb]
  \setlength{\unitlength}{1mm}
  \begin{center}

 \includegraphics[width=70mm]{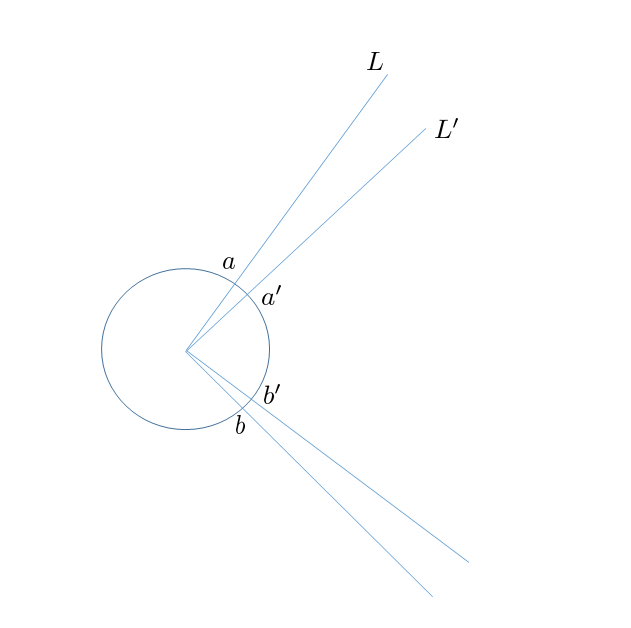}
  \caption{The $L$ and $L'$ for the second case }
  \label{Figure-1}

  \end{center}
  \end{figure}

Let $C \subset \mathbb{C} \setminus P_f$ be a simple closed curve and let $C^1$ be a component of $F^{-1}(C)$. Let $d \ge 2$ be the degree of $f$. Since $F: L \to L$ is a homeomorphism, we have
$$
N(C^1, L) \le N(C, L).
$$

\begin{lema}\label{pa}
Suppose $d \ge 2$ is the degree of $f$ and let $l, p$ be the integers determined as above. Then whenever $N(C, L) > d^{lp}$, we must have
$$
\min_{\Gamma} N(\Gamma, L) < N(C, L),
$$
where the minimum is taken over all $\Gamma$ in the homotopy class of $C^1$.
\end{lema}

Note that this implies that once the complexity is large enough, it must become strictly smaller after a fixed number of pullbacks. This gives another solution to the finite global attractor problem for the polynomial case.

\begin{proof}
We prove this by contradiction. Assume that equality holds, i.e., $N(C^1, L) = N(C, L)$. First note that each time $C^1$ enters the strip by crossing $L$ at some point $a$, it must then leave the strip by crossing $L'$ at some point $b$. Otherwise, since the strip bounded by $L$ and $L'$ contains no points of $P_f$, we could deform $C^1$ within its homotopy class to make $N(C^1, L)$ strictly smaller, yielding a contradiction. Thus the correspondence $a \mapsto b$ gives an injective map from the set of intersection points of $C^1$ with $L$ to the set of intersection points of $C^1$ with $L'$. Consequently,
$$
N(C^1, L') \ge N(C^1, L).
$$

On the other hand, since $F: L' \to L$ is a homeomorphism, we have $N(C^1, L') \le N(C, L)$. Because $N(C^1, L) = N(C, L)$, it follows that
$$
N(C^1, L') = N(C^1, L) = N(C, L).
$$

Since $F: L' \to L$ preserves orientation, for each pair of adjacent intersection points of $C^1$ with $L$ and $L'$, say $a$ and $b$, we must have $F(a) = F(b)$. It follows that the segment of $C^1$ lying between $a$ and $b$ and bounded inside the strip is mapped by $F$ onto the whole curve $C$. There are exactly $N(C^1, L)$ such disjoint `horizontal' segments in the strip, so the degree of $F: C^1 \to C$ is at least $N(C^1, L) = N(C, L)$, which by assumption is greater than $d^{lp}$. This contradicts the fact that the degree of $F$ is $d^{lp}$ (see the left side of Figure~1 for an illustration).
\end{proof}

The above observation is the starting point of this work.  Before we   explore the idea further, let us  introduce some terminology first.

We say that an intersection point of a simple closed curve in $\mathbb{C} \setminus P_f$ with $L$ is \emph{efficient} if it can not be removed by deforming the curve in $\mathbb{C} \setminus P_f$ via homotopy. We say that a simple closed curve $C \subset \mathbb{C} \setminus P_f$ is \emph{minimal} with respect to $\mathcal{F}_0$ if
$$
N_{\mathcal{F}_0}(C) = \sum_{L \in \mathcal{F}_0} N(C, L).
$$

 From now on we assume that $C$ is minimal with respect to $\mathcal{F}_0$.  By the minimality,  all intersection points of $C$ with every $L \in \mathcal{F}_0$ are efficient. Now take $L \in \mathcal{F}_0$ such that
\begin{equation}\label{mx}
N(C, L) = \max_{\Theta \in \mathcal{F}_0} N(C, \Theta).
\end{equation}
So, to prove Lemma~\ref{Bridge-Prop}, it suffices to show that after a certain number of pullbacks, $N(C, L)$ decreases by a definite factor, provided that it is large enough.

To see this, consider a segment $S$ in $C^1$ that lies in the strip as described in Figure~1. Suppose the two endpoints of $S$ are not mapped to the same point on $L$. Then as we move along $S$ from $L$ to $L'$, the image $F(S)$ starts at a point on $L$ and later returns to another point on $L$. Since $F: L' \to L$ preserves orientation, the two crossing directions at the starting and ending points must be the same. Let $K(S)$ denote the number of intersection points of $C$ with $L$ that lie between the starting point and the ending point.

\begin{lema}\label{obs-1}
$$
\min_{\Gamma} N(\Gamma, L) + K(S) \le N(C, L),
$$
where the minimum is taken over the homotopy class of $C^1$.
\end{lema}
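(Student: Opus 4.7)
The plan is to produce a representative $\Gamma$ of the homotopy class of $C'$ in $\C - P_f$ with $N(\Gamma, L) \le N(C, L) - K(S)$, which is equivalent to the stated inequality. Set $p_1 = F(a)$ and $p_2 = F(b)$ in $L$ and let $I$ denote the bounded sub-arc of $L$ from $p_1$ to $p_2$; by definition its interior contains exactly the $K(S)$ points $q_1, \dots, q_{K(S)}$ of $C \cap L$. Since $F|_L \colon L \to L$ is an orientation-preserving self-homeomorphism (both $L$ and $F = f^{lp}$ were set up so that this holds), $I$ lifts uniquely to a sub-arc $\tilde I \subset L$ from $a$ to a point $\tilde p_2 \in L$ whose interior contains the $K(S)$ preimages $\tilde q_i$ of the $q_i$.

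I would then split the saving of $K(S)$ crossings into a combinatorial part and a homotopical part. Combinatorially, injectivity of $F|_L$ makes $C' \cap L \to C \cap L$, $x \mapsto F(x)$, an injection; letting $j$ denote the number of indices $i$ with $\tilde q_i \in C'$, the $K(S) - j$ points $q_i$ not hit by this injection yield the free bound
\[
N(C', L) \;\le\; N(C, L) - (K(S) - j).
\]
Homotopically, I would then eliminate the remaining $j$ crossings at $\tilde q_{i_1}, \dots, \tilde q_{i_j}$ by a single homotopy of $C'$ in $\C - P_f$. At each $\tilde q_{i_k}$, one side of the crossing extends into the strip between $L$ and $L'$ as an arc $S_k$ of $C'$ meeting that strip and ending on $L \cup L'$; since the closed strip is disjoint from $P_f$, each $S_k$ together with the appropriate portion of $L \cup L'$ bounds a sub-region of the strip across which $C'$ can be pushed via a homotopy in $\C - P_f$, removing the crossing at $\tilde q_{i_k}$. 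Performing the $j$ pushes together, in an order compatible with the orientation-preserving nature of $F|_L$ and $F|_{L'}$ so that the $S_k$ nest consistently along $\tilde I$, yields $\Gamma$ with $N(\Gamma, L) \le N(C', L) - j$, and combining the two bounds gives $N(\Gamma, L) \le N(C, L) - K(S)$.

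The main obstacle is the homotopical step: ensuring that the $j$ local pushes assemble into a single ambient homotopy of $C'$ in $\C - P_f$ that creates no compensating crossings with $L$ elsewhere and keeps $C'$ embedded. The delicate case is when some $S_k$ ends on $L'$ rather than on $L$, so that a naive local push across $L$ is not available; here I would use the arc $S$ itself as a template, together with the $F$-lift of the topological disk bounded by $F(S) \cup I$ inside the strip, to carry out the surgery. Once this bookkeeping is in place the inequality follows at once from the two intermediate bounds above.
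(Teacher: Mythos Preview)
Your combinatorial step is fine, but the homotopical step has a genuine gap that your proposed fix does not close. When $S_k$ ends on $L'$, the crossing at $\tilde q_{i_k}$ is (locally) an \emph{efficient} crossing: $C'$ passes from one side of $L$ into the strip and continues on to $L'$, so there is no bigon with $L$ to push across, and no local surgery in the strip removes that intersection with $L$. In particular it is simply false in general that the $j$ specific crossings $\tilde q_{i_1},\dots,\tilde q_{i_j}$ are all inessential; some of them may survive in every representative of the homotopy class of $C'$. Your proposed template --- the $F$-lift of the region bounded by $F(S)\cup I$ --- does not help: the lift of the closed curve $F(S)\cup I$ through the branch containing $S$ is not closed (the lift of $I$ starting at $b$ lies on $L'$ and ends at a point $\tilde p_1'\ne a$), so it bounds no region in the strip to push across.

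The paper's argument avoids this by never trying to name which crossings are to be removed. Instead it uses the pairing observation already implicit in the proof of the previous lemma: every \emph{efficient} crossing of $C'$ with $L$ is immediately followed, inside the strip, by a crossing with $L'$, and since $C'$ is simple both members of such a pair lie on the same side of the segment $S$. On the side of $S$ where the $F^{-1}(C\cap L)$-dots on $L$ outnumber those on $L'$ by $K(S)$ (this is the side containing your $\tilde I$), the number of efficient $L$-crossings of $C'$ is therefore bounded by the number of $L'$-dots there, which is $K(S)$ fewer than the number of $L$-dots. Hence at least $K(S)$ of the $L$-dots on that side are either absent from $C'$ or inessential --- but they need not lie in $\tilde I$, which is exactly why your localized surgery cannot account for all of them. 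This count gives $\min_\Gamma N(\Gamma,L)\le N(C,L)-K(S)$ directly, with no explicit homotopy to construct.
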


\begin{proof}
Indeed, as we have seen above, each time $C^1$ makes an efficient crossing with $L$, it must then cross $L'$ immediately after the crossing with $L$. Now suppose $S$ is the curve segment illustrated in Figure~1. Below the segment $S$, the number of points on $L$ exceeds the number of points on $L'$ by $K(S)$. Hence there are at least $K(S)$ points on $L$ that are not efficient intersection points of $C^1$ with $L$. The lemma follows.
\end{proof}

Let $|P_f|$ denote the number of points in $P_f$. Now choose constants $\epsilon, C_0 > 0$, to be specified later, such that
\begin{equation}\label{CAS}
\max\{d^{lp}/C_0 , \epsilon C_0^2 \} \ll 1/|P_f|.
\end{equation}
We may assume that
\begin{equation}\label{BA}
\min_{\Gamma \sim C^1} N(\Gamma, L) > (1 - \epsilon) N(C, L),
\end{equation}
where the minimum is taken over the homotopy class of $C^1$.

From \ref{BA} it follows that $C^1$ has at least $(1 - \epsilon) N(C, L)$ arcs $S$ in the strip bounded by $L$ and $L'$ whose endpoints lie on $L$ and $L'$ respectively. Among these $S$, we are only concerned with those for which
$$
N(F(S), L) \le C_0.
$$

\begin{lema}\label{obs-2}
There are at least $(1 - \epsilon - d^{lp}/C_0 )N(C, L)$ segments $S$ such that
$$
N(F(S), L) \le C_0.
$$
\end{lema}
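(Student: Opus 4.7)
The plan is to start from the at least $(1-\epsilon)N(C,L)$ horizontal segments $S$ furnished by the second half of (\ref{BA}), and then discard two classes of \emph{bad} segments: those with $N(F(S),L)>C_0$, and those for which $F(S)$ fails to be efficient. I aim to bound these two bad classes in size by $d^{lp}N(C,L)/C_0$ and $2\epsilon C_0N(C,L)$ respectively, so that subtracting yields the stated count.

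For the first class, every point of $F(S)\cap L$ lifts under $F$ to a point of $S\cap F^{-1}(L)$, so $N(F(S),L)\le |S\cap F^{-1}(L)|$. Since the horizontal segments are pairwise disjoint subarcs of $C'$,
\[
\sum_{S} N(F(S),L)\;\le\;\bigl|C'\cap F^{-1}(L)\bigr|\;\le\;\bigl|F^{-1}(C\cap L)\bigr|\;\le\;d^{lp}\,N(C,L),
\]
using that $F=f^{lp}$ is a degree $d^{lp}$ self-map of $\mathbb{C}$. Pigeonholing then bounds the number of $S$ with $N(F(S),L)>C_0$ by $d^{lp}N(C,L)/C_0$.

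For the second class, the key topological observation is that any bigon of $F(S)$ with $L$, i.e.\ a disk $D\subset\mathbb{C}-P_f$ bounded by a subarc of $F(S)$ between two consecutive $L$-crossings and an arc of $L$, is simultaneously a bigon of $C$ with $L$. This rests on $N(F(S),L)\le C_0<N(C,L)$: the immersed arc $F(S)\subset C$ cannot wrap fully around $C$, so its image lies in a proper subarc of $C$ and each of its $L$-cut pieces coincides with a genuine subarc of $C$ between adjacent points of $C\cap L$. On the other hand, the first half of (\ref{BA}) bounds the excess $N(C,L)-\min_{\Gamma\sim C}N(\Gamma,L)$ by $\epsilon N(C,L)$, so the number of bigons of $C$ with $L$ is at most $\epsilon N(C,L)/2$. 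Each such bigon is covered by at most $\deg(F|_{C'})\le d^{lp}$ horizontal segments, so the count of non-efficient $F(S)$ is at most $d^{lp}\epsilon N(C,L)/2$. Since (\ref{CAS}) gives $d^{lp}\ll C_0/|P_f|$ and in particular $d^{lp}<C_0$, this last quantity is at most $2\epsilon C_0\,N(C,L)$.

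Subtracting the two bad counts from $(1-\epsilon)N(C,L)$ yields the asserted $(1-\epsilon-d^{lp}/C_0-2\epsilon C_0)N(C,L)$. I expect the main obstacle to be making the bigon correspondence airtight when $F|_S$ is not injective onto $F(S)$: one must verify that the relevant subarcs of $F(S)$ are embedded in $\mathbb{C}$ and coincide with actual subarcs of $C$ between adjacent $L$-intersections, and the bound $N(F(S),L)<N(C,L)$ is exactly the ingredient that rules out pathological self-overlaps.
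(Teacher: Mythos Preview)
Your proposal follows the same two-step subtraction as the paper. Part~1 is identical to the paper's argument. For Part~2 the paper argues somewhat differently: rather than counting bigons, it observes that a non-efficient quick return $F(S)$ must contain a non-efficient intersection point of $C$ with $L$ (there are at most $\epsilon N(C,L)$ of these by~(\ref{BA})), and then bounds the number of quick returns through any fixed point of $C\cap L$ by $2C_0$, using that distinct $F(S)$'s have distinct starting points in $C\cap L$ (since $F|_L$ is a homeomorphism) and each such arc spans at most $C_0$ consecutive points of $C\cap L$. Your alternative---attaching to each non-efficient quick return an elementary bigon of $C$ with $L$, and bounding the multiplicity via the covering degree $\deg(F|_{C'})\le d^{lp}$---is a legitimate variant and in fact yields a sharper intermediate estimate before you relax it to $2\epsilon C_0 N(C,L)$.

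One small gap: your bound of $\epsilon N(C,L)/2$ on the number of elementary bigons of $C$ with $L$ is not justified. A single inessential crossing can be a corner of two adjacent bigons (for instance, three consecutive elementary subarcs each bounding a $P_f$-free half-disk give three bigons but only four inessential crossings), so the honest bound is $\epsilon N(C,L)$ rather than half of it. This does not damage the final count, since $d^{lp}\cdot \epsilon N(C,L) < 2\epsilon C_0\,N(C,L)$ still holds by~(\ref{CAS}). Note also that your bigon argument tacitly restricts to quick returns (you invoke $N(F(S),L)\le C_0<N(C,L)$ to ensure $F(S)$ is an embedded proper subarc of $C$, so that consecutive $L$-crossings of $F(S)$ are consecutive for $C$); this is fine because the non-quick returns are already discarded in Part~1, but the phrasing ``two classes of bad segments'' slightly obscures that the second class is really the non-efficient \emph{quick} returns.
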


\begin{proof}
We prove this by contradiction. Suppose the statement were false. Then there would be $d^{lp} \cdot N(C, L) / C_0$ segments $S$ for which the inequality $N(F(S), L) \le C_0$ does not hold. But then the total number of intersection points of $F(C^1)$ with $L$, counted with multiplicity, would be greater than
$$
C_0 \cdot \frac{d^{lp} \cdot N(C, L)}{C_0} = d^{lp} \cdot N(C, L).
$$
This is impossible because the covering degree of $F$ is $d^{lp}$. The lemma follows.
\end{proof}

We call $F(S)$ a \emph{quick return} if $N(F(S), L) \le C_0$. By Lemma~\ref{obs-1}, if some quick return creates a definite gap between its starting point and its endpoint, then we obtain a definite reduction in the number of intersection points. So by Lemma~\ref{Bridge-Prop},  we have:

\begin{lema}\label{QR}
The main theorem follows if there exists some $\delta > 0$ such that, whenever $N(C, L)$ is sufficiently large, there is a segment $S$ satisfying $K(S) / N(C, L) > \delta$.
\end{lema}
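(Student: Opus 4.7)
The plan is to deduce Lemma~\ref{Bridge-Prop} directly from the hypothesis; the main theorem then follows by the chain of reductions already carried out in $\S3$. Throughout we may assume $C$ is in minimal position, so that $N_{\mathcal{F}_0}(C) = \sum_{L'' \in \mathcal{F}_0} N(C, L'')$; combined with the choice of $L$ in (\ref{mx}) this yields
$$N(C, L) \,>\, \frac{1}{2|\mathcal{F}_0|}\, N_{\mathcal{F}_0}(C).$$

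Fix an arbitrary non-peripheral component $C'$ of $F^{-1}(C)$, where $F = f^{lp}$. Assuming $N_{\mathcal{F}_0}(C) > D$ for $D = D(f)$ sufficiently large that $N(C, L)$ exceeds the threshold in the hypothesis, the hypothesis produces a segment $S$ of $C'$ in the strip with $K(S) > \delta\, N(C, L)$. Lemma~\ref{obs-1} then gives
$$\min_{\Gamma \sim C'} N(\Gamma, L) \,\le\, N(C, L) - K(S) \,<\, (1-\delta)\, N(C, L).$$
For every other $L'' \in \mathcal{F}_0$, the element $L''$ is periodic of period $p$ under $f$, so $F|_{L''}\colon L''\to L''$ is a homeomorphism; points of $C'\cap L''$ are thereby carried injectively into $C\cap L''$, giving $N(C',L'')\le N(C,L'')$ and hence $\min_{\Gamma\sim C'} N(\Gamma,L'')\le N(C,L'')$.

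Choosing a single representative $\Gamma_0$ of the homotopy class of $C'$ that simultaneously realizes the minimum $\min_{\Gamma\sim C'} N(\Gamma, L'')$ for every $L''\in \mathcal{F}_0$ (for instance the hyperbolic geodesic representative in $\mathbb{C}\setminus P_f$), we get
$$N_{\mathcal{F}_0}(C') \,\le\, \sum_{L''\in\mathcal{F}_0} N(\Gamma_0, L'') \,\le\, (1-\delta)\, N(C, L) + \sum_{L''\ne L} N(C, L'') \,=\, N_{\mathcal{F}_0}(C) - \delta\, N(C, L).$$
Substituting the lower bound on $N(C,L)$ gives
$$N_{\mathcal{F}_0}(C') \,\le\, \Bigl(1 - \frac{\delta}{2|\mathcal{F}_0|}\Bigr) N_{\mathcal{F}_0}(C),$$
which is precisely Lemma~\ref{Bridge-Prop} with $m=lp$ and contraction rate $1-\delta/(2|\mathcal{F}_0|)$; the main theorem then follows as in $\S3$.

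The only technical point worth flagging is the simultaneous minimization step: that a single representative $\Gamma_0$ of $C'$ realizes the minimal intersection number with every $L''\in \mathcal{F}_0$ at once. This is the standard fact that on the hyperbolic punctured surface $\mathbb{C}\setminus P_f$ the geodesic representative of a non-peripheral simple closed curve minimizes intersection with every essential geodesic arc simultaneously; it can equally well be obtained combinatorially via iterated bigon removal with respect to $\bigcup_{L''\in\mathcal{F}_0} L''$.
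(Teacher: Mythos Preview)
Your proof is correct and follows the route the paper intends: the paper states Lemma~\ref{QR} without proof, treating it as an evident consequence of Lemma~\ref{obs-1} together with the choice~(\ref{mx}) of $L$, and your argument supplies exactly those details, reducing to Lemma~\ref{Bridge-Prop} with $m=lp$. The one point you rightly flag---simultaneous minimal position of $\Gamma_0$ with every $L''\in\mathcal{F}_0$---is legitimate here because the arcs in $\mathcal{F}_0$ are built from external and internal rays and hence are pairwise without bigons in $\mathbb{C}\setminus P_f$; the bigon-removal justification therefore goes through (the geodesic phrasing is slightly loose since the $L''$ themselves are not geodesics, but the conclusion is unaffected).
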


 \section{$\epsilon$-Barrier lakes}

Recall that $C$ is minimal
 and $C^1$ is a component of $F^{-1}(C)$ so that  (\ref{BA})  holds.
By Lemma~\ref{QR}, the Main Theorem will follow if some quick return yields a gap compatible with $N(C, L)$. Hence we may assume that all quick returns produce thin gaps. Then we obtain  $\emph{barrier lakes}$, where the $\emph{barriers}$ are the parts of $L$ whose endpoints are the starting and ending points of the quick return.    In what follows, we assume  that all barriers are thin, each containing at most $\epsilon N(C,L)$ intersection points of $C$ and $L$. Let us explain this more precisely.

\begin{rmk}\label{npt}
The barrier does not contain post-critical point. This is true even if $L$   contains a post-critical point $w$, see Figure 6. In this case, the segment $S$ is contained in one of the two sectors bounded by $L$ and $L'$ and thus the two end points of $F(S)$  must belong to   one of the two components of $L - \{w\}$.
\end{rmk}
We say that a barrier lake $B$ \emph{separates} two post-critical points $x$ and $y$ if there exist two sublakes of $B$ containing $x$ and $y$, respectively. A barrier lake is called \emph{type I} if it does not separate any pair of post-critical points; otherwise it is called \emph{type II}. Since a type II barrier lake separates post-critical points by definition, it is clear that:

\begin{lema}\label{obs-3}
A type II barrier lake contains at least two post-critical points.
\end{lema}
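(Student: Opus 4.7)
The plan is essentially to unfold the definitions carefully. By construction, a quick return $F(S)$ determines an arc $F(S)\cup I$ in the plane, where $I\subset L$ is the barrier segment joining the two endpoints of $F(S)$. The set $\mathbb{C}-(F(S)\cup I)$ has finitely many bounded components; by definition the barrier lake is the union of these bounded components, and each such bounded component is a sublake. Thus the barrier lake is literally the disjoint union of its sublakes.

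Now I would invoke the hypothesis that the barrier lake is of type II. By definition this means the lake is not of type I, i.e.\ it \emph{does} separate post-critical points. Unfolding the definition of ``separates,'' there exist two post-critical points $x,y\in P_f$ and two sublakes $U_x,U_y$ with $x\in U_x$ and $y\in U_y$. Since $U_x$ and $U_y$ are different sublakes, and sublakes are disjoint as components of $\mathbb{C}-(F(S)\cup I)$, the points $x$ and $y$ are automatically distinct. Both $U_x$ and $U_y$ are contained in the barrier lake, so $\{x,y\}$ lies in the barrier lake, giving the required two distinct post-critical points.

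There is essentially no obstacle here: the lemma is a tautological consequence of the two definitions, and the only thing one needs to be careful about is that ``separates'' was defined via the existence of \emph{two different} sublakes, each containing one of the points, so the distinctness $x\neq y$ is built into the definition rather than something extra to verify. The role of this lemma in the sequel is to give a ``budget'' bound: since $|P_f|$ is finite, the number of pairwise disjoint type II barrier lakes one can pack is controlled by $|P_f|$, and this is what will eventually let the authors organize most of $C$ into only finitely many annuli in Lemma~6.6.
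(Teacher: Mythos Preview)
Your proposal is correct and matches the paper's approach: the paper simply remarks that since a type II barrier lake by definition separates post-critical points, the lemma is clear, and you have just unfolded that one-line observation into its constituent definitional steps. The only minor redundancy is your check that $x\neq y$ via disjointness of sublakes, since the phrase ``separates \emph{two} post-critical points $x$ and $y$'' already presupposes distinctness; but this does no harm.
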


Let $T \subset \mathbb{C}$ be a piece of curve segment such that both endpoints of $T$ lie on $L$ and have the same cross directions (Note this does not depend on the orientation of $T$). We can make $T$ into a closed curve by adjoining to $T$ the segment of $L$ that connects the two endpoints of $T$. We say that $T$ is a \textit{loop} if this closed curve is a simple closed curve. We say that two loops $T$ and $T'$ are homotopic to each other if the two simple closed curves associated to them are homotopic in $\mathbb{C} \setminus P_f$. Two loops are called \textit{adjacent} if the endpoint of one loop is the starting point of the other. A \textit{long spiral} $S$ is the union of many adjacent loops with the same homotopy type. We define the \textit{winding number} of a long spiral to be the number of loops in it.

The following lemma seems obvious. However, it is the foundation for the thick-thin decomposition in the next section; for completeness, we present a detailed proof.

\begin{lema}\label{lp1}
Suppose $F(S)$ produces a type~I barrier lake $B$. Then $F(S)$ is the union of finitely many adjacent loops, all of the same homotopy type and having the same intersection number with $L$. Moreover, the associated Jordan curve for each loop in $B$ is non-peripheral in $\overline{\mathbb{C}} \setminus P_f$; that is, $B$ contains at least two post-critical points, and the complement of $B$ contains at least one finite post-critical point.
\end{lema}

\begin{proof}
In the case where $F(S)$ does not cross the interior of the barrier, the barrier lake is a Jordan domain bounded by $F(S)$ and the barrier $I$. Then $F(S)$ consists of a single loop. Now assume that $F(S)$ crosses the interior of the barrier. We claim that for any two adjacent intersection points of $F(S)$ and $I$, taken in the order they appear along $F(S)$, the two cross directions are the same. We prove the claim by contradiction.

 Suppose that $A$, $B$, and $C$ are three adjacent intersection points of $F(S)$ and $I$, and that the two cross directions at $A$ and $B$ are different. Let $\Gamma$ be the arc of $F(S)$ connecting $A$ and $B$, and let $I_1$ be the segment of $I$ connecting $A$ and $B$. Similarly, let $\Theta$ be the arc of $F(S)$ connecting $B$ and $C$, and let $I_2$ be the segment of $I$ connecting $B$ and $C$. Let $U$ and $V$ be the Jordan domains bounded by $I_1$ and $\Gamma$, and by $I_2$ and $\Theta$, respectively. Because $A$ and $B$, and $B$ and $C$, are adjacent intersection points of $F(S)$ and $I$ by assumption, $\Gamma$ does not cross the interior of $I_2$ and $\Theta$ does not cross the interior of $I_1$. It follows that $U$ and $V$ are either disjoint or nested. On the other hand, since $C$ is minimal and the barrier contains no post-critical points, $U$ and $V$ must each contain at least one post-critical point. Thus $U$ and $V$ are not disjoint; otherwise we obtain a contradiction with the fact that the barrier lake created by $F(S)$ is of type I and therefore does not separate post-critical points. Hence $U$ and $V$ must be nested. Since $[A, B]$ is part of the boundary of $U$, for points near $[A, B]$, only those on one side of $[A, B]$ belong to $U$. To fix ideas, we assume that points close to $[A, B]$ and lying on the right side of $[A, B]$ belong to $U$ (see Figure 7 for an illustration). The following argument can be easily adapted to the other case. Then we must have $U \subset V$. Indeed, in a small neighborhood of $B$, there are points on $\partial V$ and on the left side of $[A, B]$ that do not belong to $\overline{U}$. This implies that there are points in $V$ which are not in $U$. Since $U$ and $V$ must be nested, we conclude $U \subset V$.

  \begin{figure}[!htpb]
  \setlength{\unitlength}{1mm}
  \begin{center}

 \includegraphics[width=100mm]{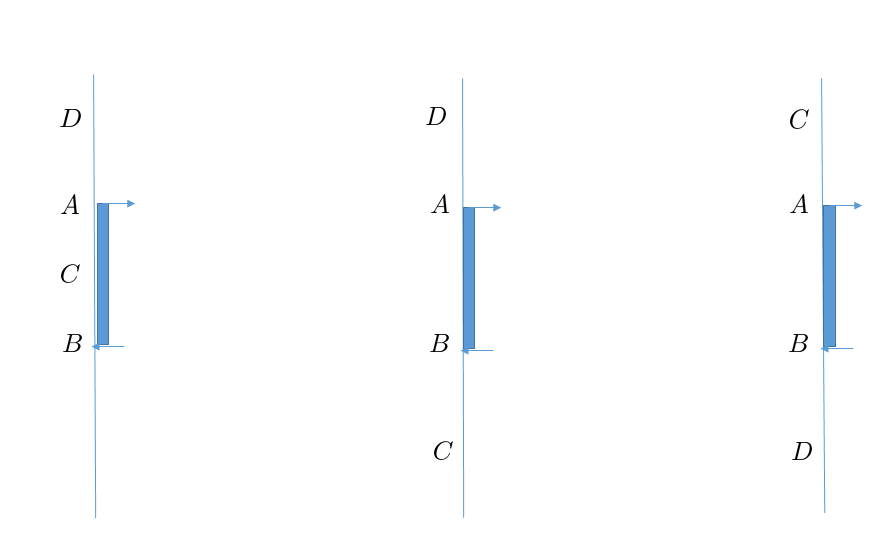}
  \caption{The two adjacent cross directions at $A$ and $B$ are different.  }
  \label{spiral}

  \end{center}
  \end{figure}

The proof of the claim is divided into three cases according to the position of $C$ relative to $A$ and $B$, namely, between $A$ and $B$, below $B$, and above $A$. The argument is the same for all three cases. We provide the details only for the first case and indicate the differences for the other two cases.

 We first suppose that $C$ lies between $A$ and $B$. Since $U \subset V$ and $A \notin \partial V$, it follows that $A \in V$ (see Figures 8 and 9 for an illustration). Let $R$ denote the ray component of $L \setminus \{A\}$ that does not contain $B$. There are two possibilities. The first possibility is illustrated in Figure 8, and the second in Figure 9.

For the first possibility, $R$ first crosses $\Gamma$ at $D$ before it crosses $\Theta$. Then the arc of $\Gamma$ connecting $A$ and $D$, together with the segment of $L$ connecting $A$ and $D$, bound a Jordan domain $X$. A brief reflection shows that $\partial X \cap \partial V = \emptyset$ and hence $\overline{X} \subset V$ and $\overline{X} \cap U = \emptyset$. Since $C$ is minimal, $\overline{X}$ must contain a post-critical point, which then lies in $V$. This contradicts the fact that $U$ and $V$ do not separate post-critical points.

In the second possibility, $R$ crosses $\Theta$ at $D$ before it crosses $\Gamma$. Then the segment of $L$ connecting $D$ and $C$ together with the arc of $\Theta$ connecting $C$ and $D$ bound a Jordan domain $W$. Clearly, $W \cup (A, D) \subset V$ and $(W \cup (A, D)) \cap U = \emptyset$. Since $C$ is minimal, $W \cup (C, D)$ must contain a post-critical point. Because the barrier (and hence $[A, C]$) contains no post-critical point, $W \cup (A, D)$ must contain a post-critical point. This again contradicts the fact that $U$ and $V$ contain the same set of post-critical points.

       In the case where $C$ is below $B$, $A$ is still an interior point of $V$. Again, let $R$ be the ray component of $L \setminus \{A\}$ that does not contain $B$. Then there are two possibilities: either $R$ first crosses $\Gamma$ at $D$ or first crosses $\Theta$ at $D$. For the first possibility, consider the Jordan domain $X$ bounded by $(A, D)$ and the curve segment in $\Gamma$ connecting $A$ and $D$. For the second possibility, consider the Jordan domain $W$ bounded by $(B, D)$ and the curve segment of $\Theta$ connecting $B$ and $D$.

In the case where $C$ is above $A$, let $R$ be the ray component of $L \setminus \{B\}$ that does not contain $A$. Then there are two possibilities: either $R$ first crosses $\Gamma$ at $D$ or first crosses $\Theta$ at $D$. For the first possibility, consider the Jordan domain $X$ bounded by $(B, D)$ and the curve segment in $\Gamma$ connecting $B$ and $D$. For the second possibility, consider the Jordan domain $W$ bounded by $(B, D)$ and the curve segment of $\Theta$ connecting $B$ and $D$.

  \begin{figure}[htbp]
 \centering
 \begin{minipage}[t]{0.40\textwidth}
   \includegraphics[width=\textwidth]{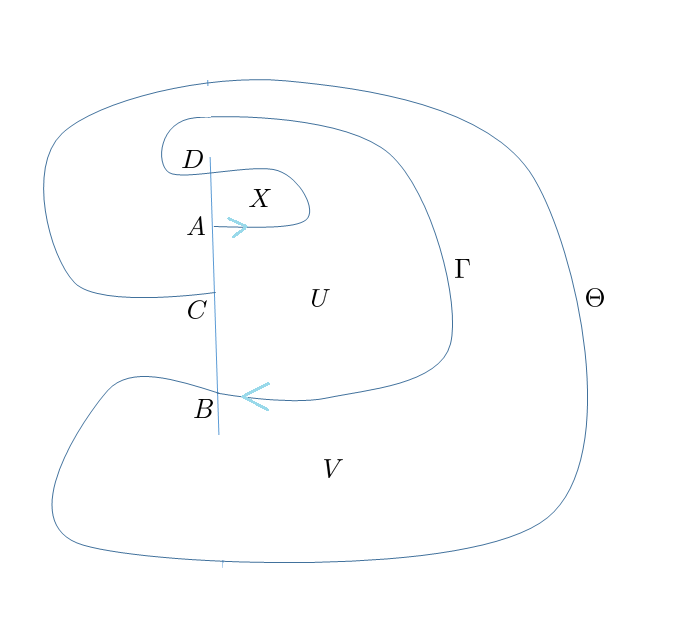}
   \caption{The first possibility for the first case }
   \label{fig:image1}
 \end{minipage}
 \hfill
 \begin{minipage}[t]{0.40\textwidth}
   \includegraphics[width=\textwidth]{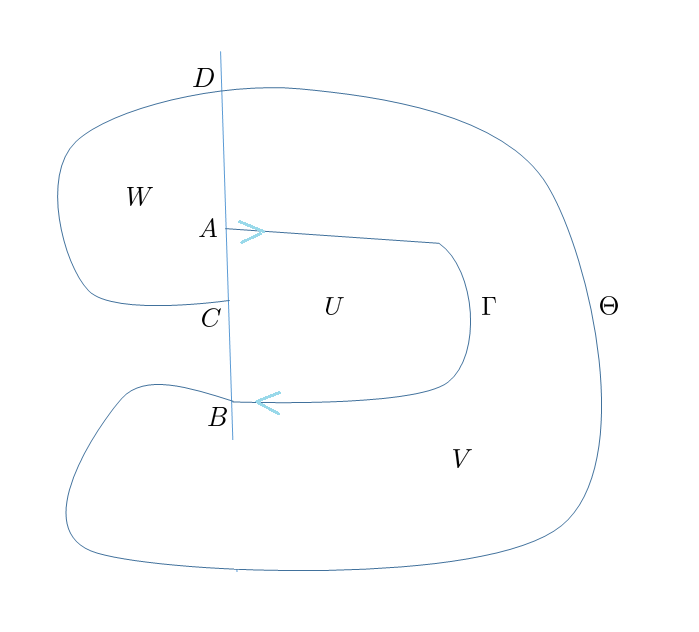}
   \caption{The second possibility for the first case }
   \label{fig:image2}
 \end{minipage}
\end{figure}

The claim has thus been proved. Now we start from the starting point of $F(S)$. By the claim, the cross directions of $F(S)$ and $I$ at the first and second intersection points are the same. Hence the arc of $F(S)$ between the first and second intersection points forms a loop. Next, consider the arc of $F(S)$ between the second and third intersection points of $F(S)$ and $I$. By the claim that the two cross directions are the same and that a type~I barrier lake does not separate post-critical points, we obtain another loop that surrounds the same set of post-critical points as the previous one. Since the two Jordan curves associated with these loops are nested and contain the same set of post-critical points, they must be homotopic to each other in $\mathbb{C} \setminus P_f$. Because $C$ is minimal by assumption, the two loops must have the same intersection number with $L$. The first assertion then follows by continuing this process finitely many times.

      \begin{figure}[!htpb]
  \setlength{\unitlength}{1mm}
  \begin{center}

 \includegraphics[width=55mm]{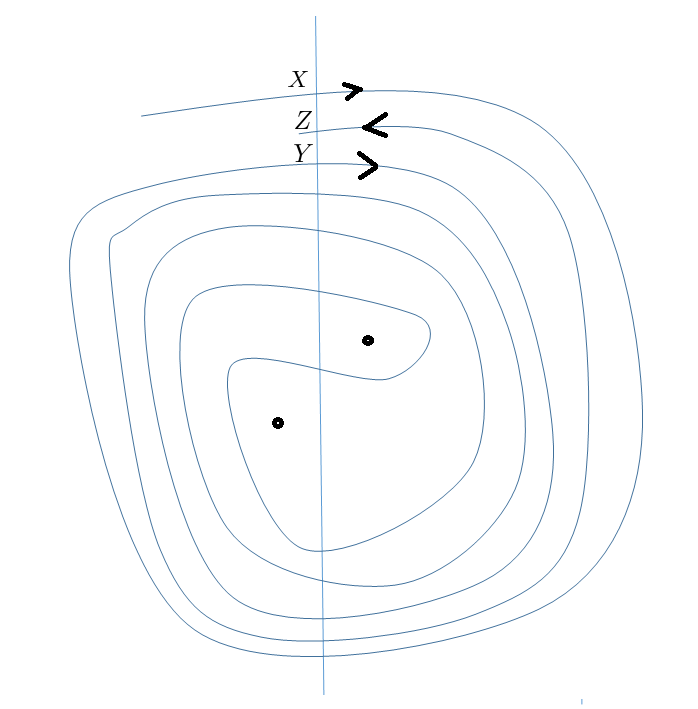}
  \caption{Barrier lake of type I contains at least two post-critical points}
  \label{spiral}

  \end{center}
  \end{figure}

To prove the last assertion, take a piece of $F(S)$, say $\Gamma$, which forms a loop. Suppose the two intersection points of $\Gamma$ with the barrier are $X$ and $Y$, respectively. Since the curve is closed, let $Z$ be the first point along the curve that intersects $(X, Y)$. Then the part of the curve from $X$ to $Z$, together with the segment $[X, Z]$, bounds a Jordan domain $W_1$. Because $C$ is minimal and because the barrier (and thus $[X, Z]$) contains no post-critical point, the Jordan domain must contain a post-critical point $x$. Similarly,  the curve segment from $Y$ to $Z$ and the segment $[Y, Z]$ bounds  a Jordan domain $W_2$, which must also contain a post-critical point, say $y$. Since $W_1$ and $W_2$ are clearly disjoint, we have $x \neq y$. It is clear that $W_1$ and $W_2$ are contained in the sublakes of the barrier lake formed by $F(S)$,  hence $B$ contains at least two post-critical points.

To see the complement of $B$ contains at least one finite post-critical point, we may run along the loop outward and repeat the same argument as above. We get two disjoint Jordan domains in the complement of $B$, each of which is
bounded by a part of $C$ and a part of $L$, with different cross directions. Since $C$ is minimal, the bounded Jordan domain must contain at least one finite post-critical point. The last assertion follows.
\end{proof}

        \section{The thick-thin decomposition}

Recall an $\epsilon$-barrier lake is bounded by the barrier and the quick return $F(S)$.
  We say two barrier lakes are \emph{entangled} with each other if the quick return of  one lake passes through the barrier of the other. The following lemma says, in some sense, that two $\epsilon$-barrier lakes are either disjoint from each other or nested with each other.

\begin{lema}\label{dis}
Let $A$ be an $\epsilon$-barrier lake and let $M(A)$ denote the set of $\epsilon$-barrier lakes $B$ that entangle with $A$. Then
\[
|M(A)| \le 4 C_0 \epsilon N(C, L).
\]
\end{lema}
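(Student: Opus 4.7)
The plan is a counting argument based on the intersection points of $C$ and $L$ that lie on the barrier $I_A$. The key observation is that for any $B \in M(A)$, the boundary $\partial B = F(S_B) \cup I_B$ must pass through the interior of $I_A$; since $I_A$ and $I_B$ are both subarcs of the separating arc $L$, they cannot cross transversally, so $F(S_B)$ itself must cross the interior of $I_A$. Hence each $B \in M(A)$ contributes at least one point of $F(S_B) \cap L$ lying in the interior of $I_A$, and any such point is automatically an intersection point of $C$ and $L$.

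Next I would bound the pool of available intersection points and the multiplicity with which each is used. On one hand, the number of points of $C \cap L$ lying in the interior of $I_A$ is exactly $K(S_A)$, which by the $\epsilon$-barrier hypothesis is less than $\epsilon N(C,L)$. On the other hand, for each fixed $p \in C \cap L$ inside $I_A$, at most $2 C_0$ distinct barrier lakes $B \in M(A)$ can have $p \in F(S_B)$. This multiplicity bound is the same combinatorial input already used at the end of the proof of Lemma~\ref{obs-2}, and rests on the quick-return condition $N(F(S_B), L) \le C_0$ together with the simple-closed-curve structure of $C$: a sub-arc of $C$ with at most $C_0$ intersections with $L$ that passes through $p$ is determined by its two endpoints on $L$, and each of these endpoints lies among a bounded number of consecutive $L$-intersections of $C$ adjacent to $p$ along $C$.

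Combining the two estimates, $|M(A)| \le 2 C_0 \cdot K(S_A) \le 2 C_0 \epsilon N(C, L)$, which is the assertion. The only nontrivial step is the multiplicity bound, and since its justification exactly mirrors that of Lemma~\ref{obs-2}, I would refer back to that argument rather than reprove it; everything else follows immediately from the definitions of entanglement, quick return, and $\epsilon$-barrier lake.
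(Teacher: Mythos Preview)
Your argument covers only half of the definition of entanglement. The paper defines two barrier lakes $A$ and $B$ to be entangled if the boundary of \emph{one} passes through the barrier of the \emph{other}. This allows two distinct possibilities: either $\partial B$ passes through $I_A$, or $\partial A$ passes through $I_B$. Your ``key observation'' that $\partial B = F(S_B) \cup I_B$ must pass through the interior of $I_A$ presupposes the first possibility and ignores the second. Nothing in the setup forces $F(S_B)$ to meet $I_A$ when it is instead $F(S_A)$ that meets $I_B$; the barriers $I_A$ and $I_B$ may even be disjoint segments of $L$.

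The paper's proof handles this by splitting $M(A)$ into the two cases and bounding each by $C_0\,\epsilon\, N(C,L)$, summing to the stated $2C_0\,\epsilon\, N(C,L)$. Your counting for the first case (with the multiplicity $2C_0$ borrowed from the proof of Lemma~\ref{obs-2}) is fine as far as it goes, but it bounds only those $B$'s with $F(S_B)$ crossing $I_A$. To repair the argument you must also bound the $B$'s in the second case: there $F(S_A)$ crosses $I_B$, and since $F(S_A)$ meets $L$ in at most $C_0$ points while each barrier $I_B$ contains at most $\epsilon\, N(C,L)$ points of $C\cap L$, one may count (roughly symmetrically) that such $B$'s number at most $C_0\,\epsilon\, N(C,L)$ as well. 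Without this second case your bound on $|M(A)|$ is incomplete.
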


\begin{proof}
Let $B$ be an $\epsilon$-barrier lake that is entangled with $A$. Then there are two cases.

\textbf{First case:} the quick return part of $\partial B$ passes through the barrier of $A$.
Note that the quick return part of $\partial B$ is determined by a curve segment $S$  in $C^1$ connecting two points, one in $L$ and one in $L'$, which is then determined by the starting point of $S$ lying in $L$. Since $F(S)$ intersects $L$ at most $C_0$ times, for each intersection point of $F(S)$ with the barrier of $A$ there are at most $2C_0$ possibilities for the starting point of $S$. Because the barrier of $A$ contains at most $\epsilon N(C, L)$ intersection points of $C$ and $L$, the number of such $B$ is not more than $2C_0 \epsilon N(C, L)$.

\textbf{Second case:} the quick return part of $\partial A$ passes through the barrier of $B$.
Since the barrier of $B$ contains at most $\epsilon N(C, L)$ intersection points of $C$ and $L$, for each transversal intersection point $z$ of $\partial A$ with $L$, there are at most $2 \epsilon N(C, L)$ possibilities for the starting point of $S$ such that $F(S)$ is the quick return part for $\partial B$ and $z$ belongs to the barrier of $B$. Because the number of transversal intersection points of $\partial A$ with $L$ is at most $C_0$, the number of such $B$ is not more than $2C_0 \epsilon N(C, L)$.

This proves the lemma.
\end{proof}

Let $\mathcal{B} = \{B_i\}$ be a maximal family of type II barrier lakes such that no two lakes in the family are entangled with each other; in other words, for any two lakes in $\mathcal{B}$, either they are disjoint, or one is contained in a sublake of the other. It is possible that $\mathcal{B}= \emptyset$.

\begin{lema}\label{cs}
The number of type II barrier lakes in $\mathcal{B}$ is strictly less than $|P_f|$.
\end{lema}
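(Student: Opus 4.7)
\medskip

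The plan is to organize $\mathcal{B}$ into a forest whose leaves are post-critical points and then apply a standard counting argument for trees with branching at least two.

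First I would set up the forest. Since $\mathcal{B}$ is a family of non-entangled barrier lakes, by definition any two members are either disjoint or one is contained in a sublake of the other. Hence inclusion gives a forest structure on $\mathcal{B}$: define the parent of $B\in \mathcal{B}$ to be the smallest $B'\in \mathcal{B}$ that strictly contains $B$ (if any). I then enrich this forest by adjoining, as additional leaves, every post-critical point that lies in $\bigcup \mathcal{B}$, attaching each such $p$ to the smallest lake in $\mathcal{B}$ containing $p$. In this enlarged forest, the internal nodes are exactly the members of $\mathcal{B}$ and the leaves are post-critical points, so the number of leaves is at most $|P_f|$.

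The key step is to verify that each internal node has at least two children. Fix $B\in \mathcal{B}$. Because $B$ is type II, it has two sublakes $S_1,S_2$ (connected components of the interior of $B$) containing post-critical points $p_1,p_2$ respectively. For $i=1,2$, let $c_i$ be the child of $B$ through which $p_i$ descends: either $c_i=p_i$ itself (if no lake of $\mathcal{B}$ strictly smaller than $B$ contains $p_i$), or $c_i$ is the maximal member of $\mathcal{B}$ properly contained in $B$ that contains $p_i$. By non-entanglement, every lake in $\mathcal{B}$ strictly contained in $B$ lies in a single sublake of $B$, so $c_i$ lies in $S_i$. Since $S_1\cap S_2=\emptyset$, the children $c_1$ and $c_2$ are distinct. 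Thus $B$ has at least two children.

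To finish, I apply the elementary fact that in a forest with $L$ leaves in which every internal node has at least two children, the number of internal nodes is at most $L-1$. Indeed, summing children over internal nodes gives at least $2k$, while the same sum equals (total nodes) $-$ (roots) $= k+L-r \le k+L-1$, whence $k\le L-1$. With $k=|\mathcal{B}|$ and $L\le |P_f|$ we conclude $|\mathcal{B}|\le |P_f|-1<|P_f|$. I do not expect any serious obstacle here; the only point requiring care is the verification of the two-children property, which relies essentially on Lemma~\ref{obs-3} (type II implies two sublakes carry post-critical points) together with the non-entanglement hypothesis that keeps every smaller lake confined to a single sublake.
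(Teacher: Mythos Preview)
Your argument is correct. Both your proof and the paper's hinge on the same two ingredients: the non-entanglement hypothesis gives $\mathcal{B}$ a forest structure under inclusion, and the type~II condition (separating post-critical points into distinct sublakes) forces branching of degree at least two at every node. The paper, however, packages this differently: rather than counting leaves in an enriched forest, it builds an explicit injection $\Psi:\mathcal{B}\to P_f$ by induction from the innermost lakes outward, at each stage assigning one post-critical point and ``saving'' another for use by ancestors. Your formulation via the standard tree inequality $k\le L-1$ is shorter and more transparent; the paper's explicit injection is more hands-on but accomplishes exactly the same bookkeeping. One small remark: at the end you attribute the two-children step to Lemma~\ref{obs-3}, but what you actually use is the definition of type~II itself (two sublakes with post-critical points), which is slightly sharper than the statement of that lemma.
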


\begin{proof}
If $\mathcal{B}= \emptyset$, the lemma is trivial. Suppose $\mathcal{B} \ne \emptyset$.
It suffices to construct an injective but not onto map $\Psi: \mathcal{B} \to P_f$. For $B \in \mathcal{B}$, let $\mathcal{F}(B)$ denote the family of elements $B'$ in $\mathcal{B}$ such that either $B' = B$ or $B'$ is contained in some sublake of $B$.

We aim to construct $\Psi$ so that for each $B \in \mathcal{B}$, the following property holds:
\[
\text{For any } B' \in \mathcal{F}(B),\quad \Psi(B') \in B \quad\text{and}\quad |\Psi(\mathcal{F}(B))| < |B \cap P_f|.
\]
Our construction starts from some innermost element of $\mathcal{B}$. Here, ``innermost'' means that it does not contain any other element of $\mathcal{B}$ in its sublake.

To proceed, let $B$ be an arbitrary innermost element in $\mathcal{B}$. Then by Lemma~\ref{obs-3}, $B$ separates two points in $P_f$, say $x$ and $y$. Define $\Psi(B) = x$ and save the point $y$. Clearly, $B$ satisfies the required property. Do the same for all other innermost elements of $\mathcal{B}$. Then remove all innermost elements from $\mathcal{B}$, obtaining a new family $\mathcal{B}'$.

If $\mathcal{B}' = \emptyset$, we are done. Otherwise, repeat the same procedure for $\mathcal{B}'$. Suppose $B'$ is an innermost element of $\mathcal{B}'$. We have two cases.

\emph{Case 1:} $B'$ contains exactly one innermost element $B$ from $\mathcal{B}$. Since $B'$ separates points in $P_f$, $B'$ contains at least one post-critical point $x'$ not contained in $B$. Define $\Psi(B') = x'$. Then the required property holds for $B'$ in this case.

\emph{Case 2:} $B'$ contains $m \ge 2$ innermost elements from $\mathcal{B}$, which together save $m \ge 2$ points in $P_f$. We may define $\Psi(B')$ to be one of these $m$ points, leaving $m-1 \ge 1$ post-critical points saved. Hence the required property also holds in this case.

Proceed similarly for all other innermost elements of $\mathcal{B}'$. After that, remove these innermost elements from $\mathcal{B}'$ to obtain a new family $\mathcal{B}''$. If $\mathcal{B}'' = \emptyset$, we are done. Otherwise, repeat this process again. Finally, we reach an empty family and the theorem follows.
\end{proof}

By Lemma~\ref{obs-2}, except at most
\[
(\epsilon + d^{lp}/C_0) N(C, L)
\]
points, any other intersection point of $C$ and $L$ must belong to some quick return $F(S)$, which by assumption forms either a type I or type II $\epsilon$-barrier lake. By Lemmas~\ref{dis}--\ref{cs}, the number of barrier lakes entangled with elements in $\mathcal{B}$ is at most $4 C_0 \epsilon |P_f| N(C, L)$. Since the number of intersection points of the quick return of each $\epsilon$-barrier lake with $L$ is at most $C_0$, all these together imply:

\begin{lema}\label{er-i}
Except at most
\[
(\epsilon + d^{lp}/C_0 + 4 \epsilon C_0^2 |P_f|) N(C, L)
\]
points, any other intersection point of $C$ and $L$ must belong to some quick return $F(S)$ which produces a type I barrier lake not entangled with any element in $\mathcal{B}$.
\end{lema}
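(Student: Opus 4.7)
The plan is to decompose the intersection points of $C \cap L$ into good ones, which lie on some efficient quick return $F(S)$ producing a type I barrier lake not entangled with $\mathcal{B}$, and bad ones, which split into two sub-classes: (i) those lying on no efficient quick return, and (ii) those lying on an efficient quick return whose barrier lake is either type II or is type I but entangled with some element of $\mathcal{B}$. I would bound (i) using Lemma~\ref{obs-2} and bound (ii) using the combination of the maximality of $\mathcal{B}$ with Lemma~\ref{dis}.

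For class (i), I would invoke Lemma~\ref{obs-2} directly: it produces at least $(1 - \epsilon - d^{lp}/C_0 - 2\epsilon C_0) N(C,L)$ segments $S$ of $C^1$ in the strip bounded by $L$ and $L'$ for which $F(S)$ is efficient and $N(F(S), L) \le C_0$. Since $F|_L$ is a homeomorphism of $L$ and, by (\ref{A-4}), $N(C^1, L) > (1-\epsilon) N(C,L)$, the $L$-endpoints of these good segments map bijectively under $F|_L$ to a subset of $C \cap L$ of the same cardinality. Consequently the intersection points of $C \cap L$ that fail to lie on any efficient quick return number at most $(\epsilon + d^{lp}/C_0 + 2\epsilon C_0) N(C,L)$, accounting for the first three terms of the stated bound.

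For class (ii), I would first use the maximality of $\mathcal{B}$ to observe that any type II barrier lake not already in $\mathcal{B}$ must be entangled with some element of $\mathcal{B}$; together with the type I lakes entangled with $\mathcal{B}$, this means that every bad lake relevant to class (ii) either lies in $\mathcal{B}$ or is entangled with some element of $\mathcal{B}$. Lemma~\ref{dis} then controls the entangled case: for each fixed $A \in \mathcal{B}$, at most $2 C_0 \epsilon N(C,L)$ other $\epsilon$-barrier lakes are entangled with $A$, and each such lake has at most $C_0$ intersection points with $L$ on its boundary, producing a $2 \epsilon C_0^2 N(C,L)$ bound per entanglement. The small residual contribution of $\mathcal{B}$ itself, bounded by $|\mathcal{B}| C_0 < |P_f| C_0$, can be absorbed into the standing hypothesis $N_{\mathcal{F}_0}(C) > D$ via (\ref{CAS}) by choosing $D$ sufficiently large, yielding the final $2 \epsilon C_0^2 N(C,L)$ term.

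The main obstacle I anticipate is the bookkeeping in class (i): one must carefully translate Lemma~\ref{obs-2}'s count of segments $S$ into a count of intersection points in $C \cap L$, checking that no intersection point is double-counted across several $F(S)$'s and that interior crossings of $F(S)$ with $L$ (contributing to $K(S)$) are properly handled. This relies essentially on $F|_L$ being a homeomorphism and on the efficiency hypothesis (\ref{A-3}), but the precise correspondence between $S$-endpoints on $L$ and the bad-point set must be set up with care to avoid losing a constant factor or inflating the bound.
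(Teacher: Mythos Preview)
Your approach is essentially the paper's own: it obtains the first three terms from Lemma~\ref{obs-2} and then derives the remaining $2\epsilon C_0^2$ term by combining the maximality of $\mathcal{B}$ with Lemma~\ref{dis}, exactly as the paper does in the two sentences preceding the lemma. Your anticipated bookkeeping subtlety in class~(i) --- translating the segment count into a count of points in $C\cap L$ via the homeomorphism $F|_L$ --- is the right observation and is precisely what the paper is tacitly using.
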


 Now we group the type I barrier lakes guaranteed by Lemma~\ref{er-i} whose loops have the same homotopy type into a class. We label these classes by
\[
A_1, A_2, \dots
\]
such that
\[
N(A_1, L) \ge N(A_2, L) \ge \cdots,
\]
where $N(A_i, L)$ denotes the number of intersection points of all the loops contained in $A_i$ with $L$. Let $n \ge 1$ be the  integer satisfying
\[
N(A_1, L) \ge N(A_2, L) \ge \cdots \ge N(A_n, L) > 4 C_0^2 \epsilon N(C, L) \ge N(A_{n+1}, L).
\]

\begin{lema}\label{obv}
All $A_i$, $1 \le i \le n$, are pairwise disjoint in the sense that for any $1 \le i \ne j \le n$, the barrier lakes in $A_i$ are not entangled with the barrier lakes in $A_j$. In particular, $n \le |P_f| - 3$.
\end{lema}

\begin{proof}
Suppose that for some $1 \le i \ne j \le n$, a type I lake in $A_i$ is entangled with a type I lake $A$ in $A_j$. Since $C$ is minimal and since the loops of the lakes in $A_i$ have the same homotopy type,  all lakes in $A_i$ are entangled with $A$. By Lemma~\ref{dis}, the number of type I barrier lakes contained in $A_i$ is at most $4 C_0 \epsilon N(C, L)$, and hence $N(A_i, L) \le 4 C_0^2 \epsilon N(C, L)$. This contradicts the choice of $n$. The first assertion of the lemma follows.

The second assertion follows from Lemma~\ref{lp1} and the fact that a family of pairwise non-homotopic, disjoint, non-peripheral curves in $\overline{\mathbb{C}} \setminus P_f$ contains at most $|P_f| - 3$ elements.
\end{proof}

     \begin{figure}[!htpb]
  \setlength{\unitlength}{1mm}
  \begin{center}

 \includegraphics[width=150mm]{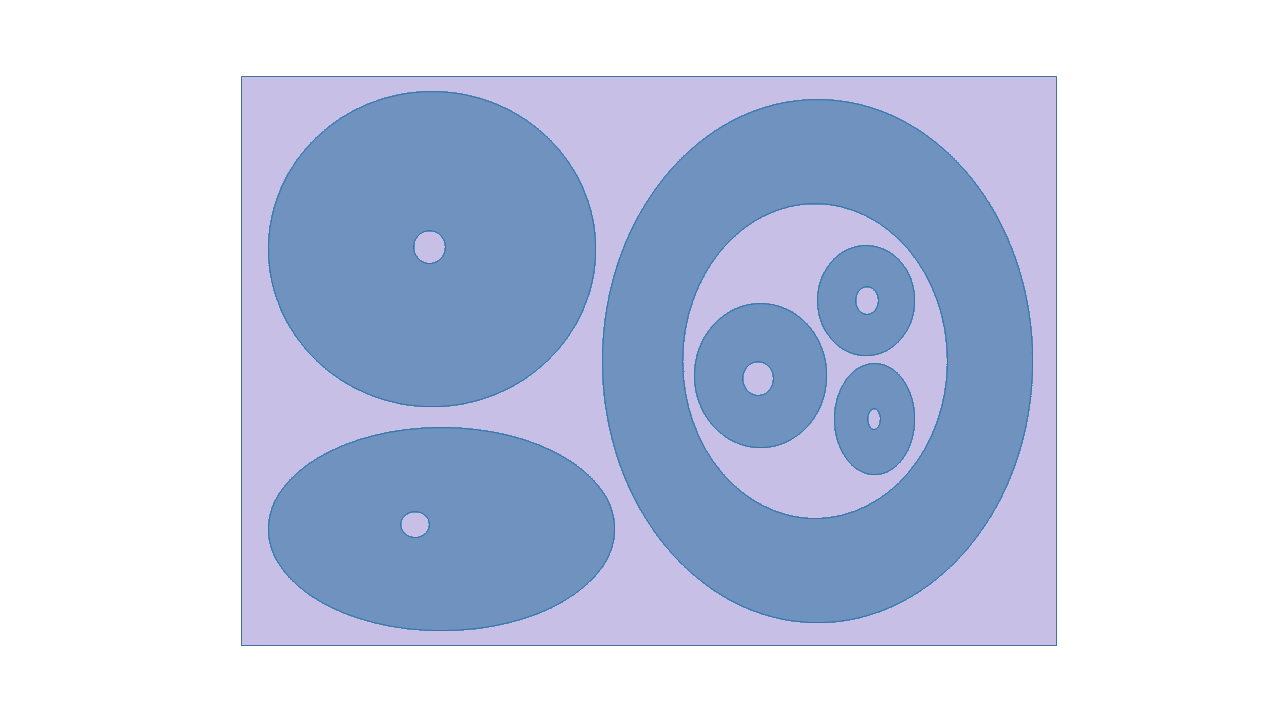}
  \caption{The thick-thin decomposition of the curve $C$. The deep gray part represents the part of $C$ which contributes the most to $N(C, L)$, while the shallow gray part does little contribution to $N(C, L)$.}
  \label{spiral}

  \end{center}
  \end{figure}
Now for each $A_i$ with $1 \le i \le n$, since $C$ is minimal by assumption and all its loops have the same homotopy type, we can take the maximal and minimal loops in $A_i$ so that each piece of $C$ lying between these two loops forms a long spiral consisting of the same number of loops, and $A_i$ is the union of a large number of such long spirals. In other words, if we call the type I barrier lake containing the maximal loop the maximal lake and the one containing the minimal loop the minimal lake, then each time the curve goes into the interior of the maximal lake by passing through its barrier, because $C$ is minimal, it must turn around the minimal lake the same number of times and then enter the interior of the minimal lake through its barrier. The same holds for the reverse order. We still use $A_i$ to denote the union of all these spirals, each of which, up to homotopy, is the union of the same number of adjacent loops with the same homotopy type. We call $A_i$ an annular part of $C$.

By the choice of $n$, for all $i > n$, the type I lakes in $A_i$ must be entangled with some $A_j$ for some $1 \le j \le n$. By Lemmas~\ref{er-i}, \ref{obv} and Lemma~\ref{dis}, it follows that
\[
\sum_{1 \le i \le n} N(A_i, L) \ge \bigl(1 - \epsilon - d^{lp}/C_0 - 8 |P_f| C_0^2 \epsilon\bigr) N(C, L).
\]

\begin{lema6}\label{ann-1} Suppose $C$ is a simple closed curve in $\mathbb{C} \setminus P_f$ and minimal with respect to $\mathcal{F}_0$, and $C^1$ is a component of $F^{-1}(C)$ such that
\[
\min_{\Gamma \sim C^1} N(\Gamma, L) > (1 - \epsilon) N(C, L),
\]
where the minimum is taken over the homotopy class of $C^1$. Then most of $C$ can be organized into finitely many disjoint annuli $A_i$, $1 \le i \le n$ (with $n \le |P_f|-3$), each of which consists of a number of long spirals with the same winding number and homotopy type, such that
\[
N(A_i, L) > 4 \epsilon C_0^2 N(C, L), \qquad 1 \le i \le n,
\]
and
\[
N(C, L) - \sum_{1 \le i \le n} N(A_i, L) \le \bigl(\epsilon + d^{lp}/C_0 + 8 |P_f| C_0^2 \epsilon \bigr) N(C, L),
\]
and  homotopy complexity of any loop in
  these $A_i$, with respect to $\mathcal{F}_0$,    has an upper bound  depending only on  $C_0$.
\end{lema6}

\section{proof of Lemma~\ref{Bridge-Prop}}

Assume that $C$ is minimal such that $\mathcal{N}_{\mathcal{F}_0}(C) > D$ for some large $D$ to be specified at the end. We further assume that there exists a component $C^1$ of $F^{-1}(C)$ such that
\[
\min_{\Gamma \sim C^1} N(\Gamma, L) > (1 - \epsilon) N(C, L).
\]
By the thick--thin decomposition theorem, most of $C$ is organized into disjoint thick annuli, say $A_1, \dots, A_n$. Recall that each $A_i$ is the union of a large number of long spirals with the same homotopy type. By homotopy, we may deform $C$ so that each $A_i$ is contained in an annulus $H_i$ with the same homotopy type and with $P_f \cap H_i = \emptyset$.

Let $t$ be the integer in (\ref{tc}). Now for $m \ge t$ to be specified later, each component of $F^{-m}(H_i)$ is an annulus. We consider only those components whose core curve is non-peripheral. Denote the non-peripheral components of $F^{-m}(H_i)$ by $H_i^1, \dots, H_i^{l_i}$. Then $l_i \le |P_f| - 3$. On the other hand, for each spiral $S$ in $A_i$, each component of $F^{-m}(S)$ is again a spiral. All components of $F^{-m}(S)$ contained in $H_i^j$ form an annular part, which we denote by $A_i^j$. Let $C^m$ be a component of $F^{-m}(C)$. Since $F^m: L \to L$ is a homeomorphism, it follows that
\[
\min_{\Gamma \sim C^m} N(\Gamma, L) \le \sum_{1 \le i \le n,\, 1 \le j \le l_i} N(A_i^j, L) \le \sum_{1 \le i \le n} N(A_i, L).
\]

Let $d_i^j$ denote the degree of $F^m: A_i^j \to A_i$. Note that the length of the loops in $A_i^j$, with respect to the orbifold metric, has a positive lower bound. On the other hand, the complexity of the loops in $A_i$ with respect to $\mathcal{F}_0$ has an upper bound. Therefore, by Lemma~\ref{w-2}, the pullback of the spirals in $A_i$ by $F^t$ yields spirals whose loops have bounded complexity with respect to $\mathcal{F}$, and consequently, up to homotopy in $\Bbb C - P_f$,  bounded length with respect to the orbifold metric. Since $F$ strictly expands the orbifold metric in a neighborhood of the Julia set, and since $F^m = F^{m-t} \circ F^t$, we obtain the following lemma.

\begin{lema}\label{con1}
There exist constants $\lambda, \mu > 0$ depending only on $f$ and $C_0$ such that $d_i^j > \lambda e^{\mu m}$ for all $1 \le i \le n$ and $1 \le j \le l_i$.
\end{lema}

For an annular part $A$ and a long spiral $S$, we denote by $S(A)$ the number of spirals in $A$, by $W(S)$ the winding number of $S$, and by $L(A)$ the number of loops in $A$. Then, up to negligible errors, we have the following.

\begin{lema}\label{winn}
Let $S$ be a spiral in $A_i$ and $S'$ a spiral in $A_i^j$. Then
\[
S(A_i^j) = d_i^j \, S(A_i), \qquad W(S) = d_i^j \, W(S'), \qquad L(A_i) = L(A_i^j).
\]
\end{lema}
Let $C^m$ be a component of $F^{-m}(C)$. The above lemma does not immediately imply a definite reduction from $N(C, L)$ to $N(C^m, L)$. Indeed, although the spirals in $A_i^j$ become shorter, the number of spirals increases while the total number of loops remains unchanged. The key idea of our proof is to show that, except for a bounded number of spirals in each $A_i^j$, all remaining spirals can be removed by deformation in $\mathbb{C} \setminus P_f$. Since Lemma~\ref{con1} guarantees that the number of spirals increases exponentially, by choosing $m$ large enough we ensure that more than half of the spirals in $A_i^j$ become inefficient, hence $N(C^m, L) < \frac{1}{2} N(C, L)$. Lemma~\ref{Bridge-Prop} then follows.

Now let us proceed to the details. To fix ideas, fix $1 \le i \le n$ and $1 \le j \le l_i$. Recall that the annular part $A_i^j$ is contained in the annulus $H_i^j$. Let $U$ denote the unbounded component of $\mathbb{C} \setminus \overline{H_i^j}$. Imagine that an ant travels along a component of $F^{-m}(C)$. Suppose it first travels  a spiral $S'$ in $A_i^j$ from the inside to the outside. To make $S'$ non-homotopically trivial, it must cross the outer boundary component of $H_i^j$ and then turn around at least one post-critical point in $U$ before returning to $H_i^j$. Hence the number of non-trivial spirals in $A_i^j$ is at most twice the number of arcs that turn around one or more post-critical points. Each such arc together with a part of the outer boundary component of $H_i^j$ bounds a Jordan domain in $\mathbb{C}$ containing at least one post-critical point. These Jordan domains are either disjoint or nested. We call the number of Jordan domains in a nest the length of the nest. Clearly, the number of nests is at most $|P_f|$. Let $M'$ denote the maximal length of the nests for $A_i^j$. The notion applies similarly to $A_i$, and we let $M$ be the maximal length of the nests for $A_i$. By deforming $C$ outside all $H_i$ if necessary, we may assume that for all $1 \le i \le n$, whenever an arc of $C$ goes outside $\partial H_i$, it must turn around some post-critical point before returning to $\partial H_i$.

\begin{lema}\label{e-num}
$M' \le M$.
\end{lema}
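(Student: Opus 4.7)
The plan is to push a maximal nest for $A_i$ forward by $F^{m-t}$ and extract from its image a nest for $A$ of at least the same length. Fix a nest $D_1 \supsetneq D_2 \supsetneq \cdots \supsetneq D_{M'}$ realizing the maximum for $A_i$, with each $D_j$ a Jordan domain in the unbounded component of $\mathbb{C}\setminus A_i$, bounded by an arc $\gamma_j\subset C^m$ and a segment $\beta_j$ on the outer boundary of $A_i$, and containing a post-critical point $p_j\in D_j\setminus D_{j+1}$ (with $p_{M'}\in D_{M'}$).

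The first step is to invoke the polynomial mapping property. Since $F=f^{lp}$ is a polynomial, infinity is totally invariant, so $F^{m-t}$ sends the unbounded component of $\mathbb{C}\setminus A_i$ onto the unbounded component of $\mathbb{C}\setminus A$; combined with $F^{m-t}(A_i)=A$, $F^{m-t}(C^m)\subset C^t$, and $F(P_f)\subset P_f$, this gives that $\tilde\gamma_j:=F^{m-t}(\gamma_j)$ is an arc of $C^t$ outside $A$ and $q_j:=F^{m-t}(p_j)$ is a post-critical point outside $A$. Moreover, each $\tilde\beta_j:=F^{m-t}(\beta_j)$ lies on the outer boundary of $A$ since $F^{m-t}$ carries the outer boundary of $A_i$ onto the outer boundary of $A$.

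Next, I would define $\tilde D_j$ to be the Jordan subdomain of the outside of $A$ bounded by $\tilde\gamma_j$ together with a segment of the outer boundary of $A$, chosen to contain $q_j$. Using that $F^{m-t}$ restricted to the outside component is a proper branched cover, the inclusions $D_{j+1}\subset D_j$ project to $F^{m-t}(D_{j+1})\subseteq F^{m-t}(D_j)$, and by tracking which complementary Jordan domain receives $q_j$ one concludes $\tilde D_{j+1}\subseteq\tilde D_j$. Hence $\{\tilde D_j\}_{j=1}^{M'}$ is a chain of nested Jordan domains in the outside of $A$, each bounded by a $C^t$-arc and a piece of $\partial A$, and each containing a post-critical point.

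The main obstacle is upgrading these inclusions to strict containments, i.e., showing the projection cannot collapse a level of the nest. The idea is that each $D_j\setminus D_{j+1}$ contains a post-critical point, and one has freedom in choosing the $p_j$; since the total nest length is bounded by $|P_f|$, one can arrange that the $p_j$'s are selected from distinct $F^{m-t}$-fibers over post-critical points, so that the $q_j$'s witness strict separation between successive $\tilde D_j$. The polynomial rigidity is essential here: a general Thurston map could fold or identify nest levels, but because $F$ is a polynomial and $P_f$ is finite and forward invariant, the separating combinatorics of post-critical points in the basin of infinity relative to $A$ is preserved under the projection, forcing $\tilde D_{j+1}\subsetneq\tilde D_j$ for each $j$. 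This yields a nest of length $M'$ for $A$, hence $M\ge M'$.
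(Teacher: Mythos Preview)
Your proposal contains a fundamental misreading of the setup that makes the argument break down. You assume each ring $D_j\setminus D_{j+1}$ contains its own post-critical point $p_j$; but the definition of a nest only requires that each $D_j$ contain \emph{some} post-critical point, and this may be a single point $p\in D_{M'}$ shared by all of them (the paper's proof explicitly works with one such $p$). Indeed, the whole point of the lemma is that $M$ and $M'$ can be large---comparable to the number of spirals, hence far exceeding $|P_f|$---so your later step of choosing the $p_j$ ``from distinct $F^{m-t}$-fibers over post-critical points'' is impossible in general, and your mechanism for obtaining strict inclusions $\tilde D_{j+1}\subsetneq\tilde D_j$ collapses.

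There is a second gap: you assert that $\tilde\gamma_j=F^{m-t}(\gamma_j)$ is an arc of $C^t$ lying entirely outside $A$, and that $F^{m-t}$ sends the unbounded component of $\mathbb C\setminus A_i$ to that of $\mathbb C\setminus A$. Neither holds. There are other preimage components of $A$ in the outside of $A_i$, so the image of the outside of $A_i$ generically covers all of $\mathbb C$; the paper says explicitly that $F^{m-t}(\xi_1)$ ``may repeatedly go back to the inside and the outside of $A$''. Consequently your $\tilde D_j$ is not well-defined as \emph{the} Jordan domain bounded by $\tilde\gamma_j$ and a boundary segment, and the nesting argument via $F^{m-t}(D_{j+1})\subseteq F^{m-t}(D_j)$ does not yield $\tilde D_{j+1}\subseteq\tilde D_j$, since $\tilde D_j$ is only one of possibly many complementary Jordan domains.

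The paper avoids both problems by working inductively with \emph{pieces} of the image arcs rather than with pushed-forward domains. From $F^{m-t}(\xi_1)$ it extracts, using minimality of $C^t$, one maximal outside piece $\eta_1$ that turns around some post-critical point; then, given $\eta_{j-1}$, it picks $z\in\xi_{j-1}$ with $F^{m-t}(z)\in\eta_{j-1}$ and uses that $z$ is separated from $\infty$ by $\xi_j\cup\partial A_i$ (polynomial mapping property: $\infty$ is totally invariant) to conclude $F^{m-t}(z)$ lies inside a Jordan domain bounded by some piece $\eta_j$ of $F^{m-t}(\xi_j)$ and a segment of $\partial A$. The strict nesting $\eta_{j-1}\prec\eta_j$ is then automatic---$\eta_j$ turns around $\eta_{j-1}$ by construction---without ever needing distinct post-critical points at each level.
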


\begin{proof}
Let $\mathcal{N}$ be a nest of $A_i^j$ of length $M'$. Let $\xi_1, \dots, \xi_{M'}$ be the arcs of $\mathcal{N}$, ordered according to the nest:
\[
\xi_1 \prec \xi_2 \prec \cdots \prec \xi_{M'}.
\]
Let $U_j$ denote the Jordan domain bounded by $\xi_j$ and part of the outer component of $\partial H_i^j$. Then
\[
U_1 \subsetneq U_2 \subsetneq \cdots \subsetneq U_{M'}.
\]
Let $p \in P_f$ be a point belonging to all $U_j$. Then $F^{m}(\xi_1) \subset C$ starts from $\partial H_i$, goes outside $H_i$, may repeatedly go back inside and outside $H_i$, and finally returns to $\partial H_i$ from the outside. By assumption, there is a piece of curve segment of $F^{m}(\xi_1)$ outside $H_i$ that turns around some post-critical point of $f$, say $q$. Take the maximal such piece and denote it by $\eta_1$ (maximal means no other piece of $F^{m}(\xi_1)$ turns around $\eta_1$). Since $\xi_2$ turns around $\xi_1$, there must be some piece of $F^m(\xi_2)$ that turns around $\eta_1$. To see this, take a point $z \in \xi_1$ such that $F^m(z) \in \eta_1$. Since $z$ is an interior point of $U_2$, $F^m(z)$ must be an interior point of a bounded component of $\mathbb{C} \setminus F^m(\partial U_2)$. Because $F^m(\partial U_2)$ is a subset of the union of $F^m(\xi_2)$ and the outer component of $\partial H_i$, there exists an arc in $F^m(\xi_2)$ whose endpoints lie on the outer component of $\partial H_i$, turning around $F^m(z)$ and hence around $\eta_1$. Take the maximal such arc and denote it by $\eta_2$. Continuing this process, we obtain a nest for $A_i$ (not necessarily maximal) such that
\[
\eta_1 \prec \eta_2 \prec \cdots \prec \eta_{M'}.
\]
Thus $M' \le M$, and the lemma follows.
\end{proof}

\begin{figure}[!htpb]
\centering
\includegraphics[width=100mm]{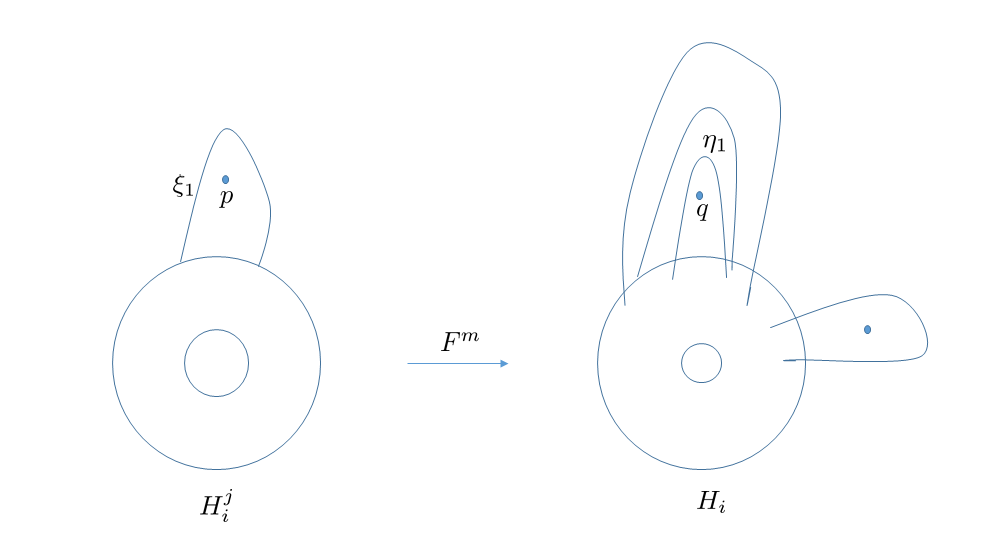}
\caption{$F^{m}(\xi_1)$}
\label{Figure-12}
\end{figure}

Now by Lemma~\ref{con1}  we may   choose $m \ge 1$, depending only on $f$ and $C_0$, such that for all $1 \le i \le n$ and $1 \le j \le l_i$, the degree of
\[
F^m: A_i^j \to A_i
\]
is greater than $2|P_f|$. Since $S(A_i) > 2M$ (each arc in the maximal nest corresponds to two spirals), Lemma~\ref{winn} implies
\[
S(A_i^j) > 4 M |P_f|, \qquad \forall 1 \le j \le l_i.
\]

On the other hand, because the maximal length of the nests for $A_i^j$ is $M'$ and the number of nests is at most $|P_f|$, the number of \emph{efficient} spirals (those that cannot be removed by deformation in $\mathbb{C} \setminus P_f$) is at most $2 M' |P_f| \le 2 M |P_f|$. This implies that at least half of the spirals in each $A_i^j$ ($1 \le i \le n$, $1 \le j \le l_i$) are non-efficient. Consequently, the reduction from $N(C, L)$ to $N(C^m, L)$ is at least half of
\[
\sum_{1 \le i \le n} N(A_i, L),
\]
which accounts for most of $N(C, L)$. Thus $$N(C^m, L) < \frac{1}{2}N(C, L).$$

Since the constant $m$ depends only on $C_0$ and $f$, it remains to specify the constants $\epsilon$, $C_0$, and $D$ before concluding the proof of Lemma~\ref{Bridge-Prop}. To this end, set $\kappa = 1/100$ and define
\[
C_0 = \frac{|P_f| d^{lp}}{\kappa}, \qquad
\epsilon = \frac{\kappa}{8 C_0^2 |P_f|}.
\]
The above argument relies on the fact that the pullback of a spiral in $A_i$ by $F^m$ contains an integer number of loops. Consequently, it suffices to choose $D > 0$ sufficiently large so that the winding number of each spiral in every $A_i^j$ is at least $10$. Lemma~\ref{Bridge-Prop} then follows.

\vspace{0.5cm}
\noindent \textbf{Acknowledgement.} We would like to thank Kevin Pilgrim for introducing us to the finite global attractor problem. We would also like to thank Dan Margalit for his explanation of the complexity of the tree lifting algorithm, which is the motivation for this work.

\end{document}